\newcommand{\EE}{\mathbb{E}} 
\newcommand{\RR}{\ensuremath{\mathbb{R}}}           % The real numbers
\newcommand{\LL}{\mathcal{L}}
\theoremstyle{plain}
\newtheorem{lemma}{Lemma}[section]
\newtheorem{proposition}[lemma]{Proposition}
\newtheorem{theorem}[lemma]{Theorem}
\newtheorem{corollary}[lemma]{Corollary}
\theoremstyle{definition}
\newtheorem{example}[lemma]{Example}
\theoremstyle{definition}
\theoremstyle{remark}
\newtheorem{remark}[lemma]{Remark}
\newcommand{\devnull}[1]{}
\numberwithin{equation}{section}
\author{Richard A. Davis\textsuperscript{$\ast$}, Mikkel Slot Nielsen\textsuperscript{$\dagger$} and Victor Rohde\textsuperscript{$\dagger$}}
\date{Columbia University\textsuperscript{$\ast$} and Aarhus University\textsuperscript{$\dagger$}}
\begin{document}
\title{Stochastic differential equations with a fractionally filtered delay: a semimartingale model for long-range dependent processes}

\maketitle

\begin{abstract}
In this paper we introduce a model, the stochastic fractional delay differential equation (SFDDE), which is based on the linear stochastic delay differential equation and produces stationary processes with hyperbolically decaying autocovariance functions. The model departs from the usual way of incorporating this type of long-range dependence into a short-memory model as it is obtained by applying a fractional filter to the drift term rather than to the noise term. The advantages of this approach are that the corresponding long-range dependent solutions are semimartingales and the local behavior of the sample paths is unaffected by the degree of long memory. We prove existence and uniqueness of solutions to the SFDDEs and study their spectral densities and autocovariance functions. Moreover, we define a subclass of SFDDEs which we study in detail and relate to the well-known fractionally integrated CARMA processes. Finally, we consider the task of simulating from the defining SFDDEs.
\\ \\
\noindent \textit{MSC 2010 subject classifications:} Primary 60G22, 60H10, 60H20; secondary 60G17, 60H05
\\ \\
\noindent \textit{Keywords:} long-range dependence; stochastic delay differential equations; moving average processes; semimartingales;
\end{abstract}
\section{Introduction}
Models for time series producing slowly decaying autocorrelation functions (ACFs) have been of interest for more than 50 years. Such models were motivated by the empirical findings of Hurst in the 1950s that were related to the levels of the Nile River. Later, in the 1960s, Benoit Mandelbrot referred to a slowly decaying ACF as the Joseph effect or long-range dependence. Since then, a vast amount of literature on theoretical results and applications have been developed. We refer to \cite{beran2016long,long_range,pipiras2017long,samorodnitsky2016stochastic,samorodnitsky2007long} and references therein for further background.

A very popular discrete-time model for long-range dependence is the \textit{autoregressive fractionally integrated moving average} (ARFIMA) process, introduced by \citet{grangerIntroduction} and \citet{hoskingFractional}, which extends the ARMA process to allow for a hyperbolically decaying ACF. Let $B$ be the backward shift operator and for $\gamma>-1$, define $(1-B)^\gamma$ by means of the binomial expansion,
\begin{align*}
(1-B)^\gamma = \sum_{j=0}^\infty\pi_j B^j
\end{align*}
where $\pi_j =\prod_{0<k\leq j}\frac{k-1-\gamma}{k}$. An ARFIMA process $(X_t)_{t\in \mathbb{Z}}$ is characterized as the unique purely non-deterministic process (as defined in \cite[p.~189]{BrockDavis}) satisfying
\begin{align}\label{discARFIMA}
P(B)(1-B)^{\beta}X_t =Q(B) \varepsilon_t,\quad t \in \mathbb{Z},
\end{align}
where $P$ and $Q$ are real polynomials with no zeroes on $\{z \in \mathbb{C}\, :\, \vert z\vert \leq 1\}$, $(\varepsilon_t)_{t\in \mathbb{Z}}$ is an i.i.d.\ sequence with $\mathbb{E}[\varepsilon_0] = 0$, $\mathbb{E}[\varepsilon_0^2]< \infty$, and $\beta \in (0,1/2)$. The ARFIMA equation (\ref{discARFIMA}) is sometimes represented as an ARMA equation with a fractionally integrated noise, that is,
\begin{align}\label{discARFIMAnoise}
P(B) X_t = Q(B) (1-B)^{-\beta}\varepsilon_t,\quad t \in \mathbb{Z}.
\end{align}
In (\ref{discARFIMA}) one applies a fractional filter to $(X_t)_{t\in \mathbb{Z}}$, while in (\ref{discARFIMAnoise}) one applies a fractional filter to $(\varepsilon_t)_{t\in \mathbb{Z}}$. One main feature of the solution to (\ref{discARFIMA}), equivalently (\ref{discARFIMAnoise}), is that the autocovariance function $\gamma_X(t) := \mathbb{E}[X_0X_t]$ satisfies
\begin{align}\label{longMemory}
\gamma_X (t) \sim c t^{2\beta -1},\quad t \to \infty,
\end{align}
for some constant $c>0$. 

A simple example of a continuous-time stationary process which exhibits long-memory in the sense of (\ref{longMemory}) is an Ornstein-Uhlenbeck process $(X_t)_{t\in \mathbb{R}}$ driven by a fractional L\'{e}vy process, that is, $(X_t)_{t\in \mathbb{R}}$ is the unique stationary solution to
\begin{align}\label{fOUcompact}
dX_t = -\kappa X_t\, dt +d I^\beta L_t,\quad t\in \mathbb{R},
\end{align}
where $\kappa>0$ and
\begin{align}\label{fractionalLevyProc}
I^\beta L_t := \frac{1}{\Gamma (1+\beta)}\int_{-\infty}^t\big[(t-u)^\beta - (-u)_+^\beta \big]\, dL_u,\quad t \in \mathbb{R},
\end{align}
with $(L_t)_{t\in \mathbb{R}}$ being a L\'{e}vy process which satisfies $\mathbb{E}[L_1] = 0$ and $\mathbb{E}[L_1^2]<\infty$. In (\ref{fractionalLevyProc}), $\Gamma$ denotes the gamma function and we have used the notation $x_+ = \max \{x,0\}$ for $x\in \mathbb{R}$. The way to obtain long memory in (\ref{fOUcompact}) is by applying a fractional filter to the noise, which is in line with (\ref{discARFIMAnoise}). To demonstrate the idea of this paper, consider the equation obtained from (\ref{fOUcompact}) but by applying a fractional filter to the drift term instead, i.e.,
\begin{align}\label{newFracOU}
X_t -X_s = -\frac{\kappa}{\Gamma (1-\beta)}\int_{-\infty}^t\big[(t-u)^{-\beta}-(s-u)_+^{-\beta} \big]X_u\, du + L_t -L_s
\end{align}
for $s<t$. One can write (\ref{newFracOU}) compactly as
\begin{align}\label{newFOUcompact}
dX_t = - \kappa D^\beta X_t\, dt + dL_t,\quad t \in \mathbb{R},
\end{align}
with $(D^\beta X_t)_{t\in \mathbb{R}}$ being a suitable fractional derivative process of $(X_t)_{t\in \mathbb{R}}$ defined in Proposition~\ref{fractionalX}. The equations (\ref{newFracOU})-(\ref{newFOUcompact}) are akin to (\ref{discARFIMA}). It turns out that a unique purely non-deterministic process (as defined in (\ref{purelyNon})) satisfying (\ref{newFOUcompact}) exists and has the following properties:
\begin{enumerate}[(i)]
\item\label{lm} The memory is long and controlled by $\beta$ in the sense that $\gamma_X (t)\sim c t^{2\beta -1}$ as $t\to \infty$ for some $c>0$.

\item\label{hc} The $L^2 (\mathbb{P})$-H\"{o}lder continuity of the sample paths is not affected by $\beta$ in the sense that $\gamma_X(0) - \gamma_X (t) \sim c t$ as $t \downarrow 0$ for some $c>0$ (the notion of H\"{o}lder continuity in $L^2(\mathbb{P})$ is indeed closely related to the behavior of the ACF at zero; see Remark \ref{holderRem} for a precise relation).

\item\label{sm} $(X_t)_{t\in \mathbb{R}}$ is a semimartingale.
\end{enumerate}
While both processes in (\ref{fOUcompact}) and (\ref{newFOUcompact}) exhibit long memory in the sense of (\ref{lm}), one should keep in mind that models for long-memory processes obtained by applying a fractional filter to the noise will generally not meet (\ref{hc})-(\ref{sm}), since they inherit various properties from the fractional L\'{e}vy process $(I^\beta L_t)_{t\in \mathbb{R}}$ rather than from the underlying L\'{e}vy process $(L_t)_{t\in \mathbb{R}}$. In particular, this observation applies to the fractional Ornstein-Uhlenbeck process (\ref{fOUcompact}) which is known not to possess the semimartingale property for many choices of $(L_t)_{t\in \mathbb{R}}$, and for which it holds that $\gamma_X(0) - \gamma_X(t) \sim c t^{2\beta +1}$ as $t\downarrow 0$ for some $c>0$ (see \cite[Theorem~4.7]{Tina} and \cite[Proposition~2.5]{QOU}). The latter property, the behavior of $\gamma_X$ near $0$, implies an increased $L^2(\mathbb{P})$-H\"{o}lder continuity relative to (\ref{newFOUcompact}). See Example~\ref{OU-example} for details about the models (\ref{fOUcompact}) and (\ref{newFOUcompact}).

The properties (\ref{hc})-(\ref{sm}) may be desirable to retain in many modeling scenarios. For instance, if a stochastic process $(X_t)_{t\in \mathbb{R}}$ is used to model a financial asset, the semimartingale property is necessary to accommodate the No Free Lunch with Vanishing Risk condition according to the (First) Fundamental Theorem of Asset Pricing, see \cite[Theorem~7.2]{fundametal_thm_asset}. Moreover, if $(X_t)_{t\in \mathbb{R}}$ is supposed to serve as a "good"\ integrator, it follows by the Bichteler-Dellacherie Theorem (\cite[Theorem~7.6]{Bichteler}) that $(X_t)_{t\in \mathbb{R}}$ must be a semimartingale. Also, the papers \cite{bennedsen2015rough,bennedsen2016decoupling} find evidence that the sample paths of electricity spot prices and intraday volatility of the E-mini S\&P500 futures contract are rough, and \citet{jusselin2018no} show that the no-arbitrage assumption implies that the volatility of the macroscopic price process is rough. These findings suggest less smooth sample paths than what is induced by models such as the fractional Ornstein-Uhlenbeck process (\ref{fOUcompact}). In particular, the local smoothness of the sample paths should not be connected to the strength of long memory.

Several extensions to the fractional Ornstein-Uhlenbeck process (\ref{fOUcompact}) exist. For example, it is worth mentioning that the class of \textit{fractionally integrated continuous-time autoregressive moving average} (FICARMA) processes were introduced in \citet{BrockwellMarquardt}, where it is assumed that $P$ and $Q$ are real polynomials with $\text{deg}(P)>\text{deg}(Q)$ which have no zeroes on $\{z\in \mathbb{C}\, :\, \text{Re}(z) \geq 0\}$. The FICARMA process associated to $P$ and $Q$ is then defined as the moving average process
\begin{align}\label{FICARMA1}
X_t = \int_{-\infty}^t g (t-u)\, dI^\beta L_u,\quad t \in \mathbb{R},
\end{align}
with $g :\mathbb{R}\to \mathbb{R}$ being the $L^2$ function characterized by 
\begin{align*}
\mathcal{F}[g](y):=\int_\mathbb{R}e^{iyu}g (u)\, du = \frac{Q(-iy)}{P(-iy)},\quad y\in \mathbb{R}.
\end{align*}
%Based on the characterization (\ref{FICARMA1})-(\ref{FICARMA2}), and in line with (\ref{discARFIMA}), we will think of the FICARMA process as the solution to the formal equation
%\begin{align}\label{contFICARMA}
%P(D) D^\beta X_t = Q(D) DL_t,\quad t\in \mathbb{R},
%\end{align}
%where $D$ denotes differentiation with respect to time and $D^\beta$ is a suitable fractional derivative operator. 
In line with (\ref{discARFIMAnoise}) for the ARFIMA process, a common way of viewing a FICARMA process is that it is obtained by applying a CARMA filter to fractional noise, that is, $(X_t)_{t\in \mathbb{R}}$ given by (\ref{FICARMA1}) is the solution to the formal equation
\begin{align*}
P(D)X_t = Q(D) DI^\beta L_t,\quad t \in \mathbb{R}.
\end{align*}
(See, e.g., \cite{Tina}.) Another class, related to the FICARMA process, consists of solutions $(X_t)_{t\in \mathbb{R}}$ to fractional \textit{stochastic delay differential equations} (SDDEs), that is, $(X_t)_{t\in \mathbb{R}}$ is the unique stationary solution to
\begin{align}\label{usualFracSDDE}
dX_t = \int_{[0,\infty)} X_{t-u}\, \eta (du)\, dt + dI^\beta L_t,\quad t\in \mathbb{R}, 
\end{align}
for a suitable finite signed measure $\eta$. See \cite{contARMAframework,Mohammed} for details about fractional SDDEs. Note that the fractional Ornstein-Uhlenbeck process (\ref{fOUcompact}) is a FICARMA process with polynomials $P(z) = z + \kappa$ and $Q(z) = 1$ and a fractional SDDE with $\eta = -\kappa \delta_0$, $\delta_0$ being the Dirac measure at zero. 
%In line with (\ref{fOUcompact}), both classes meet (\ref{lm}), but not (\ref{hc})-(\ref{sm}). 
% By formally applying the fractional derivative $D^{1+\beta}$ to (\ref{usualFracSDDE}), the equation may be interpreted as
%\begin{align}\label{fracSDDE}
%D\big(D^\beta X \big)_t = \big(D^\beta X \big)\ast \eta (t) + DL_t,\quad t \in \mathbb{R},
%\end{align}
%where $\ast$ denotes convolution of a function and a measure (see Section \ref{Pre}). 

The model we present includes (\ref{newFracOU}) and extends this process in the same way as the fractional SDDE (\ref{usualFracSDDE}) extends the fractional Ornstein-Uhlenbeck (\ref{fOUcompact}). Specifically, we will be interested in a stationary process $(X_t)_{t\in \mathbb{R}}$ satisfying
\begin{align}\label{fractionalSDDEintro}
X_t - X_s = \int_{-\infty}^t  \big(D^\beta_- \mathds{1}_{(s,t]} \big) (u)\, 
\int_{[0,\infty)}X_{u-v}\, \eta (dv)\, du + L_t -L_s
\end{align}
almost surely for each $s<t$, where $\eta$ is a given finite signed measure and
\begin{align*}
\big(D^\beta_- \mathds{1}_{(s,t]} \big) (u) = \frac{1}{\Gamma (1-\beta)}\big[(t-u)_+^{-\beta}- (s-u)_+^{-\beta}\big].
\end{align*}
We will refer to (\ref{fractionalSDDEintro}) as a \textit{stochastic fractional delay differential equation} (SFDDE). Equation (\ref{fractionalSDDEintro}) can be compactly written as
\begin{align}\label{semiMGrepresentation}
dX_t = \int_{[0,\infty)} D^\beta X_{t-u}\, \eta (du)\, dt + dL_t,\quad t \in \mathbb{R},
\end{align}
with $(D^\beta X_t)_{t\in \mathbb{R}}$ defined in Proposition~\ref{fractionalX}. Representation (\ref{semiMGrepresentation}) is, for instance, convenient in order to argue that solutions are semimartingales.

In Section~\ref{fracSDDEmodel} we show that, for a wide range of measures $\eta$, there exists a unique purely non-deterministic process $(X_t)_{t \in \mathbb{R}}$ satisfying the SFDDE (\ref{fractionalSDDEintro}). In addition, we study the behavior of the autocovariance function and the spectral density of $(X_t)_{t\in \mathbb{R}}$ and verify that (\ref{lm})-(\ref{hc}) hold. We end Section~\ref{fracSDDEmodel} by providing an explicit (prediction) formula for computing $\mathbb{E}[X_t \mid X_u,\, u \leq s]$. In Section~\ref{expSection} we focus on delay measures $\eta$ of exponential type, that is,
\begin{align}\label{exponentialIntro}
\eta (du) = -\kappa \delta_0 (du) + f(u)\, du,
\end{align}
where $f(t) = \mathds{1}_{[0,\infty)}(t)b^T e^{At}e_1$ with $e_1= (1,0,\dots, 0)^T\in \mathbb{R}^n$, $b\in \mathbb{R}^n$, and $A$ an $n\times n$ matrix with a spectrum contained in $\{z \in \mathbb{C}\, :\, \text{Re}(z)<0\}$. Besides relating this subclass to the FICARMA processes, we study two special cases of (\ref{exponentialIntro}) in detail, namely the Ornstein-Uhlenbeck type presented in (\ref{newFOUcompact}) and
\begin{align}\label{simpleExample}
dX_t = \int_0^\infty D^\beta X_{t-u} f(u)\, du\, dt+ dL_t,\quad t \in \mathbb{R}.
\end{align}
Equation (\ref{simpleExample}) is interesting to study as it collapses to an ordinary SDDE (cf. Propostion~\ref{fracDiffChange}), and hence constitutes an example of a long-range dependent solution to equation (\ref{usualFracSDDE}) with $I^\beta L_t-I^\beta L_s$ replaced by $L_t -L_s$. While (\ref{simpleExample}) falls into the overall setup of \cite{basse2018multivariate}, the results obtained in that paper do, however, not apply. Finally, based on the two examples (\ref{newFracOU}) and (\ref{simpleExample}), we investigate some numerical aspects in Section~\ref{estimation}, including the task of simulating $(X_t)_{t\in \mathbb{R}}$ from the defining equation. The proofs of all the results presented in Section~\ref{fracSDDEmodel} and \ref{expSection} are contained in the corresponding appendix. We start with a preliminary section which recalls a few definitions and results that will be used repeatedly.

\section{Preliminaries}\label{Pre}
For a measure $\mu$ on the Borel $\sigma$-field $\mathscr{B}(\mathbb{R})$ on $\mathbb{R}$, let $L^p(\mu)$ denote the $L^p$ space relative to $\mu$. If $\mu$ is the Lebesgue measure we suppress the dependence on $\mu$ and write $L^p $ instead of $L^p(\mu)$. By a finite signed measure we refer to a set function $\mu:\mathscr{B}(\mathbb{R})\to \mathbb{R}$ of the form $\mu = \mu^+ - \mu^-$, where $\mu^+$ and $\mu^-$ are two finite singular measures. Integration of a function $f$ with respect to $\mu$ is defined (in an obvious way) whenever $f\in L^1(\vert \mu \vert)$ where $\vert \mu \vert := \mu^+ + \mu^-$. The convolution of two measurable functions $f,g:\mathbb{R}\to \mathbb{C}$ is defined as
\begin{align*}
f\ast g (t) = \int_\mathbb{R}f(t-u)g(u)\, du
\end{align*}
whenever $f(t-\cdot)g \in L^1$. Similarly, if $\mu$ is a finite signed measure, we set
\begin{align*}
f \ast \mu (t) = \int_\mathbb{R}f(t-u)\, \mu (du)
\end{align*}
if $f(t-\cdot) \in L^1(\vert \mu\vert)$. For such $\mu$ set
\begin{align*}
D(\mu) = \biggr\{z \in \mathbb{C}\ :\, \int_\mathbb{R}e^{\text{Re}(z)u}\, \vert \mu\vert (du) <\infty\biggr\}.
\end{align*}
Then we define the bilateral Laplace transform $\mathcal{L}[\mu]:D(\mu)\to \mathbb{C}$ of $\mu$ by
\begin{align*}
\mathcal{L}[\mu](z) = \int_\mathbb{R} e^{zu}\mu (du),\quad z \in D(\mu),
\end{align*}
and the Fourier transform by $\mathcal{F}[\mu](y) = \mathcal{L}[f](iy)$ for $y\in \mathbb{R}$. If $f\in L^1$ we will write $\mathcal{L}[f] = \mathcal{L}[f(u)\, du]$ and $\mathcal{F}[f] = \mathcal{F}[f(u)\, du]$. We also note that $\mathcal{F}[f]\in L^2$ when $f \in L^1 \cap L^2$ and that $\mathcal{F}$ can be extended to an  isometric isomorphism from $L^2$ onto $L^2$ by Plancherel's theorem.

Recall that a L\'{e}vy process is the continuous-time analogue to the (discrete time) random walk. More precisely, a one-sided L\'{e}vy process $(L_t)_{t\geq 0}$, $L_0=0$, is a stochastic process having stationary independent increments and c\'{a}dl\'{a}g sample paths. From these properties it follows that the distribution of $L_1$ is infinitely divisible, and the distribution of $(L_t)_{t\geq 0}$ is determined from $L_1$ via the relation $\mathbb{E}[e^{iyL_t}] = \exp\{t\log \mathbb{E}[e^{iyL_1}]\}$ for $y \in \mathbb{R}$ and $t \geq 0$. The definition is extended to a two-sided L\'{e}vy process $(L_t)_{t\in \mathbb{R}}$ by taking a one-sided L\'{e}vy process $(L^1_t)_{t\geq 0}$ together with an independent copy $(L^2_t)_{t\geq 0}$ and setting $L_t = L^1_t$ if $t\geq 0$ and $L_t =- L^2_{(-t)-}$ if $t<0$. If $\mathbb{E}[L_1^2]<\infty$, $\mathbb{E}[L_1] = 0$ and $f \in L^2$, the integral $\int_\mathbb{R} f(u)\, dL_u$ is well-defined as an $L^2$ limit of integrals of step functions, and the following isometry property holds:
\begin{align*}
\mathbb{E}\biggr[\biggr(\int_\mathbb{R}f(u)\, dL_u \biggr)^2 \biggr] = \mathbb{E}\big[L_1^2\big]\int_\mathbb{R} f(u)^2\, du.
\end{align*}
For more on L\'{e}vy processes and integrals with respect to these, see \cite{Rosinski_spec,Sato}. Finally, for two functions $f,g:\mathbb{R}\to \mathbb{R}$ and $a \in [-\infty,\infty]$ we write $f(t) = o(g(t))$, $f(t) = O(g(t))$ and $f(t)\sim g(t)$ as $t\to a$ if
\begin{align*}
\lim_{t\to a}\frac{f(t)}{g(t)} = 0,\quad \limsup_{t\to a} \biggr\vert \frac{f(t)}{g(t)}\biggr\vert < \infty \quad \text{and}\quad \lim_{t \to a}\frac{f(t)}{g(t)} = 1,
\end{align*}
respectively.

\section{The stochastic fractional delay differential equation}\label{fracSDDEmodel}
Let $(L_t)_{t\in \mathbb{R}}$ be a L\'{e}vy process with $\mathbb{E}[L_1^2]<\infty$ and $\mathbb{E}[L_1] = 0$, and let $\beta \in (0,1/2)$. Without loss of generality we will assume that $\mathbb{E}[L_1^2] = 1$. Moreover, denote by $\eta$ a finite (possibly signed) measure on $[0,\infty)$ with
\begin{align}\label{finiteEta}
\int_{[0,\infty)} u\, \vert \eta\vert (du)< \infty.
\end{align}
and set
\begin{align}\label{fracIndicator}
\big(D^\beta_- \mathds{1}_{(s,t]} \big) (u) = \frac{1}{\Gamma (1-\beta)}\big[(t-u)_+^{-\beta}-(s-u)_+^{-\beta} \big],\quad u \in \mathbb{R}.
\end{align}
(In line with \cite{long_range} we write $D^\beta_-\mathds{1}_{(s,t]}$ rather than $D^\beta \mathds{1}_{(s,t]}$ in (\ref{fracIndicator}) to emphasize that it is the right-sided version of the Riemann-Liouville fractional derivative of $\mathds{1}_{(s,t]}$.) Then we will say that a process $(X_t)_{t \in \mathbb{R}}$ with $\mathbb{E}[\vert X_0\vert ]<\infty$ is a solution to the corresponding SFDDE if it is stationary and satisfies
\begin{align}\label{fractionalSDDE}
X_t - X_s = \int_{-\infty}^t  \big(D^\beta_- \mathds{1}_{(s,t]} \big) (u)\, \int_{[0,\infty)}X_{u-v}\, \eta (dv)\, du + L_t -L_s
\end{align}
almost surely for each $s<t$. Note that equation (\ref{fractionalSDDE}) is indeed well-defined, since $\eta$ is finite, $(X_t)_{t\in \mathbb{R}}$ is bounded in $L^1(\mathbb{P})$ and $D^\beta_- \mathds{1}_{(s,t]} \in L^1$. As noted in the introduction, we will often write (\ref{fractionalSDDE}) shortly as
\begin{align}\label{fractionalSDDEcompact}
dX_t = \int_{[0,\infty)} D^\beta X_{t-u}\, \eta (du)\, dt+ dL_t,\quad t \in \mathbb{R},
\end{align}
where $(D^\beta X_t)_{t\in \mathbb{R}}$ is a suitable fractional derivative of $(X_t)_{t\in \mathbb{R}}$ (defined in Proposition~\ref{fractionalX}).

In order to study which choices of $\eta$ that lead to a stationary solution to (\ref{fractionalSDDE}) we introduce the function $h=h_{\beta,\eta}:\{z \in \mathbb{C}\, :\, \text{Re}(z)\leq 0\}\to \mathbb{C}$ given by
\begin{align}\label{hFunction}
h(z) = (-z)^{1-\beta} - \int_{[0,\infty)} e^{zu}\, \eta (du).
\end{align}
Here, and in the following, we define $z^\gamma = r^\gamma e^{i\gamma \theta}$ using the polar representation $z=r e^{i\theta}$ for $r>0$ and $\theta \in (-\pi,\pi]$. This definition corresponds to $z^\gamma = e^{\gamma \log z}$, using the principal branch of the complex logarithm, and hence $z\mapsto z^\gamma$ is analytic on $\mathbb{C}\setminus \{z \in \mathbb{R}\, :\, z \leq 0\}$. In particular, this means that $h$ is analytic on $\{z \in \mathbb{C}\, :\, \text{Re}(z)<0\}$.
\begin{proposition}\label{kernels} Suppose that $h(z)$ defined in (\ref{hFunction}) is non-zero for every $z\in \mathbb{C}$ with $\text{Re}(z)\leq 0$. Then there exists a unique $g:\mathbb{R}\to \mathbb{R}$, which belongs to $L^\gamma$ for $(1-\beta)^{-1}<\gamma \leq 2$ and is vanishing on $(-\infty,0)$, such that
\begin{align}\label{kernelChar}
\mathcal{F}[g] (y) = \frac{(-iy)^{-\beta}}{h(iy)}
\end{align}
for $y \in \mathbb{R}$. Moreover, the following statements hold:
\begin{enumerate}[(i)]
\item\label{dAlphaLimit} For $t>0$ the Marchaud fractional derivative $D^\beta g (t)$ at $t$ of $g$ given by
\begin{align}\label{dAlphaG}
D^\beta g (t) = \frac{\beta}{\Gamma (1-\beta)}\lim_{\delta \downarrow 0} \int_\delta^\infty \frac{g(t) - g(t-u)}{u^{1+\beta}}\, du
\end{align}
exists, $D^\beta g \in L^1\cap L^2$ and $\mathcal{F}[D^\beta g](y) = 1/h(iy)$ for $y\in \mathbb{R}$.
\item\label{riemannLiouvilleg} The function $g$ is the Riemann-Liouville fractional integral of $D^\beta g$, that is,
\begin{align*}
g(t) = \frac{1}{\Gamma (\beta)}\int_0^t D^\beta g (u) (t-u)^{\beta -1}\, du
\end{align*}
for $t>0$.  
\item\label{differentiability} The function $g$ satisfies
\begin{align}\label{functionalRelation}
g (t) = 1 + \int_0^t \big(D^\beta g \big)\ast \eta (u)\, du,\quad t \geq 0,
\end{align}
and, for $v\in \mathbb{R}$ and with $D^\beta_- \mathds{1}_{(s,t]}$ given in (\ref{fracIndicator}),
\begin{align}\label{functionalEquation}
g(t-v) - g(s-v) = \int_{-\infty}^t \big(D^\beta_- \mathds{1}_{(s,t]} \big) (u)\, g \ast \eta (u-v)\, du + \mathds{1}_{(s,t]}(v).
\end{align}
\end{enumerate}
\end{proposition}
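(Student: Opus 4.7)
The approach is to construct $g$ as the inverse Fourier transform of $G(y) := (-iy)^{-\beta}/h(iy)$ and derive (i)--(iii) from the defining identity $h(iy)\,\mathcal{F}[g](y) = (-iy)^{-\beta}$.

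For existence, uniqueness, and support, I would first check $G \in L^2(\mathbb{R})$: the assumption that $h \neq 0$ on $\{\Re z \leq 0\}$ together with $h(0) = -\eta([0,\infty))$ gives $|G(y)| \asymp |y|^{-\beta}$ near $0$, while finiteness of $\eta$ makes $\mathcal{F}[\eta]$ bounded so that $h(iy) \sim (-iy)^{1-\beta}$ at infinity and $|G(y)| \asymp |y|^{-1}$ there; both are square-integrable since $\beta < 1/2$. Plancherel then gives a unique $g \in L^2$ with $\mathcal{F}[g] = G$. To place $g$ in $L^2([0,\infty))$, I would extend $G$ to the holomorphic function $(-z)^{-\beta}/h(z)$ on $\{\Re z < 0\}$ (analyticity from the principal branch of $(-z)^{-\beta}$ and the hypothesis on $h$), verify the Hardy-space bound $\sup_{a<0}\|G(a+i\cdot)\|_{L^2}<\infty$ (uniform in $a$ by continuity and nonvanishing of $h$ up to the boundary), and invoke the Paley-Wiener theorem. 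For $L^\gamma$ with $(1-\beta)^{-1}<\gamma\leq 2$, I would derive the tail asymptotic $g(t) \sim c\,t^{\beta-1}$ as $t \to \infty$ by isolating the $(-iy)^{-\beta}$-singularity of $G$ at $0$, whose inverse Fourier transform is $t^{\beta-1}_+/\Gamma(\beta)$, and showing the remaining contribution decays more rapidly; the $g \in L^2$ property covers local integrability.

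For part (i), set $\phi := \mathcal{F}^{-1}[1/h(i\cdot)]$. The asymptotic $|1/h(iy)| \asymp |y|^{-(1-\beta)}$ at infinity (with $2(1-\beta)>1$) and boundedness near $0$ give $\phi \in L^2$, and a second Paley-Wiener argument applied to $1/h$ places $\phi$ on $[0,\infty)$. I would identify $\phi$ with the Marchaud expression in (\ref{dAlphaG}) pointwise for $t > 0$: the integrand is $O(u^{-\beta})$ near $u = 0$ (using local regularity of $g$ on $(0,\infty)$ derivable from the Fourier representation) and reduces to $g(t)/u^{1+\beta}$ for $u > t$; computing the Fourier transform of the resulting limit recovers $(-iy)^\beta\mathcal{F}[g](y) = 1/h(iy)$. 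Part (ii) then follows from the convolution theorem applied to $\mathcal{F}[g] = (-iy)^{-\beta}\mathcal{F}[\phi]$ via the tempered-distribution identity $\mathcal{F}[t^{\beta-1}_+/\Gamma(\beta)] = (-iy)^{-\beta}$. The functional equations in (iii) are obtained by matching Fourier transforms on both sides, using $h\cdot\mathcal{F}[g] = (-iy)^{-\beta}$ together with the explicit Fourier transform of $D^\beta_-\mathbf{1}_{(s,t]}$.

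The main obstacle is the $L^1$-membership claim for $D^\beta g$: since $1/h(iy)$ lies in $L^2$ but not $L^1$, the conclusion $\phi \in L^1$ does not come from Fourier inversion alone. I would address this by a splitting $1/h(iy) = q(y) + r(y)$, where $q$ captures the leading $(-iy)^{-(1-\beta)}$-behavior at infinity and $r \in L^1 \cap L^2$; the inverse transform of $q$ admits an explicit form amenable to direct $L^1$-analysis, and the decay of $\phi$ at infinity can be strengthened by shifting the inverse Fourier contour into the strip $\{-\varepsilon < \Re z \leq 0\}$, available because continuity and nonvanishing of $h$ on the closed left half-plane persist in a neighborhood, together yielding $\phi \in L^1$.
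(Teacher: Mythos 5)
Your overall architecture coincides with the paper's: both construct $g$ via a Hardy-space/Paley--Wiener argument applied to $(-z)^{-\beta}/h(z)$ on the left half-plane, identify $D^\beta g$ with $\mathcal{F}^{-1}[1/h(i\cdot)]$ through Riemann--Liouville/Marchaud fractional calculus, and verify (iii) by matching Fourier transforms. The genuine gap lies in the step you yourself flag as the main obstacle, namely $D^\beta g\in L^1$. The contour-shift idea does not work: for a function $\phi$ supported on $[0,\infty)$, evaluating $\mathcal{L}[\phi]$ on a line $\{\operatorname{Re}(z)=x\}$ with $x<0$ amounts to taking the Fourier transform of $u\mapsto e^{xu}\phi(u)$, which has \emph{more} decay at $+\infty$ than $\phi$; any bound on such lines is already implied by $\phi\in L^2$ together with causality and yields no new decay of $\phi$ at $+\infty$. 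Decay at $+\infty$ would instead correspond to analytic continuation of $1/h$ into $\{\operatorname{Re}(z)>0\}$, which is unavailable since $\mathcal{L}[\eta](z)=\int_{[0,\infty)}e^{zu}\,\eta(du)$ need not converge there, so the appeal to nonvanishing of $h$ "persisting in a neighborhood" of the closed left half-plane has no content. The splitting $1/h(iy)=q(y)+r(y)$ with $r\in L^1\cap L^2$ does not close the gap either: $r\in L^1$ only gives $\mathcal{F}^{-1}[r]\in C_0$, not $\mathcal{F}^{-1}[r]\in L^1$.

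The mechanism the paper actually uses---and which your proposal never touches---is the moment hypothesis (\ref{finiteEta}), $\int_{[0,\infty)}u\,\vert\eta\vert(du)<\infty$. It permits differentiation of $y\mapsto\mathcal{L}[D^\beta g](x+iy)=1/h(x+iy)$, expressing $\mathcal{L}[u\mapsto uD^\beta g(u)]$ as a quotient whose numerator involves $\mathcal{L}[u\,\eta(du)]$ (bounded, by (\ref{finiteEta})) plus a term of order $\vert z\vert^{-\beta}$, and whose denominator is $h(z)^2\sim\vert z\vert^{2(1-\beta)}$; this quotient again satisfies the Hardy bound (\ref{HardyCond}), so $u\mapsto uD^\beta g(u)\in L^2$, which combined with $D^\beta g\in L^2$ gives $D^\beta g\in L^1$. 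Two smaller points: your route to the $L^\gamma$ membership via the tail asymptotic $g(t)\sim ct^{\beta-1}$ requires controlling the remainder through the modulus of continuity of $\mathcal{F}[\eta]$ at $0$, i.e.\ the same moment condition (the paper instead deduces $g\in L^\gamma$ from the representation $g=I^\beta(D^\beta g)$ with $D^\beta g\in L^1\cap L^2$, citing \cite{basse2018multivariate}); and your pointwise identification of the Marchaud limit presupposes local H\"older regularity of $g$ of order exceeding $\beta$ on $(0,\infty)$, which you assert but do not derive (the paper delegates both the existence of the limit and the identity $D^\beta g=\mathcal{F}^{-1}[1/h(i\cdot)]$ to \cite{TaqquFractional}).
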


Before formulating our main result, Theorem~\ref{existenceTheorem}, recall that a stationary process $(X_t)_{t\in \mathbb{R}}$ with $\mathbb{E}[X_0^2]<\infty$ and $\mathbb{E}[X_0] = 0$ is said to be purely non-deterministic if 
\begin{align}\label{purelyNon}
\bigcap_{t\in \mathbb{R}} \overline{\text{sp}}\, \{X_s\, :\, s \leq t\} = \{0\},
\end{align} 
see \cite[Section~4]{QOU}. Here $\overline{\text{sp}}$ denotes the $L^2(\mathbb{P})$-closure of the linear span.

\begin{theorem}\label{existenceTheorem} Suppose that $h (z)$ defined in (\ref{hFunction}) is non-zero for every $z\in \mathbb{C}$ with $\text{Re}(z)\leq 0$ and let $g$ be the function introduced in Proposition~\ref{kernels}. Then the process
\begin{align}\label{solution}
X_t = \int_{-\infty}^t g(t-u)\, dL_u, \quad t \in \mathbb{R},
\end{align}
is well-defined, centered and square integrable, and it is the unique purely non-deterministic solution to the SFDDE (\ref{fractionalSDDE}).
\end{theorem}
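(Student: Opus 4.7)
My plan is to construct $X$ explicitly from the moving average kernel of Proposition~\ref{kernels}, verify that it satisfies the SFDDE via the functional identity (\ref{functionalEquation}), and then establish uniqueness by a spectral argument that exploits the non-vanishing of $h$ on the imaginary axis.

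I would start by defining $X_t$ as in (\ref{solution}) and noting that $g \in L^2$ by Proposition~\ref{kernels}, so the integral is well-defined, centered, and square integrable, with stationarity following from the convolution form of the kernel and the stationary-increment property of $L$. To verify (\ref{fractionalSDDE}), I would write $X_t - X_s = \int_{\mathbb{R}}[g(t-v)-g(s-v)]\,dL_v$ and substitute the identity (\ref{functionalEquation}): the $\mathds{1}_{(s,t]}(v)$ piece integrates to $L_t - L_s$, while the remaining term produces $\int_{-\infty}^t (D^\beta_- \mathds{1}_{(s,t]})(u)\int_{[0,\infty)} X_{u-v}\,\eta(dv)\,du$ after a stochastic Fubini (to commute $dL_v$ with $du$) and an ordinary Fubini (to commute $dL_v$ with $\eta$). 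The stochastic Fubini is justified by $D^\beta_- \mathds{1}_{(s,t]} \in L^1 \cap L^2$ (using $\beta<1/2$) together with $g \ast \eta \in L^2$ (Young's inequality, with $g \in L^2$ and $|\eta|$ finite).

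For the purely non-deterministic property I would observe that $X_t \in \overline{\text{sp}}\{L_u - L_v : u, v \leq t\}$, so $\overline{\text{sp}}\{X_s : s \leq t\}$ is contained in this space; any element common to all such closed spans for $t \in \mathbb{R}$ is representable as $\int f\,dL$ for some $f \in L^2$ whose essential support is contained in $(-\infty,t]$ for every $t$, forcing $f \equiv 0$ and hence (\ref{purelyNon}).

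The uniqueness is where I expect the main difficulty. Given a second purely non-deterministic solution $Y$, set $Z := Y - X$, which is stationary, centered, square integrable, and solves the homogeneous SFDDE. Writing $Z_t = \int_{\mathbb{R}} e^{ity}\,d\Phi(y)$ for its random spectral measure with control measure $F$, the identity $\mathcal{F}[D^\beta_- \mathds{1}_{(s,t]}](y) = (iy)^{\beta-1}(e^{ity}-e^{isy})$ combined with Fubini recasts the homogeneous equation as
\[
\int_{\mathbb{R}} (e^{ity}-e^{isy})\,(iy)^{\beta-1}\,h(-iy)\,d\Phi(y) = 0, \qquad s<t,
\]
via the factorization $1 - \mathcal{L}[\eta](-iy)(iy)^{\beta-1} = (iy)^{\beta-1}h(-iy)$. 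Since $h(-iy)\neq 0$ by hypothesis and $(iy)^{\beta-1}\neq 0$ off the origin, taking variance in the displayed equation shows $F$ must be concentrated at $y=0$, and thus $Z_t = Z_0$ almost surely for every $t$. To upgrade this to $Z_0 = 0$, I would rearrange the SFDDE for $Y$ into $L_t - L_s = (Y_t - Y_s) - \int_{-\infty}^t (D^\beta_- \mathds{1}_{(s,t]})(u)\int_{[0,\infty)} Y_{u-v}\,\eta(dv)\,du$ to conclude that $L_t - L_s \in \overline{\text{sp}}\{Y_u : u \leq t\}$ whenever $s < t$; this forces $X_t \in \overline{\text{sp}}\{Y_s : s \leq t\}$, so $Z_0 = Y_t - X_t$ lies in this span for every $t$, and the purely non-deterministic property of $Y$ yields $Z_0 = 0$. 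The delicate points are the careful handling of the complex fractional factors on the imaginary axis in the spectral calculation, and the inversion of the SFDDE needed to place the innovation $L$ inside the past of $Y$.
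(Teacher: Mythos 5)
Your existence argument coincides with the paper's: both integrate the deterministic identity (\ref{functionalEquation}) against $dL$ and commute the integrals by a stochastic Fubini argument, and your additional verification that the candidate (\ref{solution}) is itself purely non-deterministic (via $\overline{\text{sp}}\{X_s : s\le t\}\subseteq \overline{\text{sp}}\{L_u-L_v : u,v\le t\}$) supplies a point the paper leaves implicit.

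The uniqueness half, however, has a genuine gap at its very first step. You set $Z:=Y-X$ and assert that $Z$ is stationary; this is what licenses the spectral representation $Z_t=\int_{\mathbb{R}} e^{ity}\,d\Phi(y)$ on which your entire variance computation rests. But stationarity of $X$ and of $Y$ separately does not give stationarity of their difference: that would require the pair $(X_t,Y_t)_{t\in\mathbb{R}}$ to be \emph{jointly} stationary, and nothing in the definition of a solution (a stationary process satisfying (\ref{fractionalSDDE}) almost surely for each $s<t$) provides this --- $Y$ is not assumed adapted to $L$, let alone jointly stationary with the particular moving average $X$. Without a control measure for $Z$, the identity $\int_{\mathbb{R}}(e^{ity}-e^{isy})(iy)^{\beta-1}h(-iy)\,d\Phi(y)=0$ and the conclusion that the mass concentrates at the origin are unavailable. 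The paper avoids the difference process entirely and runs the spectral argument on the arbitrary purely non-deterministic solution alone: substituting $Y_t=\int_{\mathbb{R}}e^{ity}\,\Lambda_Y(dy)$ into (\ref{fractionalSDDE}) gives $\int_{\mathbb{R}}\mathcal{F}[\mathds{1}_{(s,t]}](y)(iy)^{\beta}h(-iy)\,\Lambda_Y(dy)=L_t-L_s$; pure non-determinism makes the spectral measure absolutely continuous, which allows this relation to be extended from indicators to all $f\in L^2$; and the choice $f=g(t-\cdot)$, whose Fourier transform is $e^{ity}(iy)^{-\beta}/h(-iy)$, collapses the left-hand side to $Y_t$ and the right-hand side to (\ref{solution}), identifying $Y$ with $X$ directly. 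Your remaining ingredients --- the factorization $1-(iy)^{\beta-1}\mathcal{L}[\eta](-iy)=(iy)^{\beta-1}h(-iy)$, and the inversion placing $L_t-L_s$ in $\overline{\text{sp}}\{Y_u : u\le t\}$ so that (\ref{purelyNon}) kills the residual constant (exactly the degeneracy discussed in Remark~\ref{uniqueness}) --- are sound and would survive a rewrite along the paper's lines, but as stated the argument does not close.
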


\begin{remark}\label{uniqueness}
%The uniqueness part in Theorem~\ref{existenceTheorem} states that $(X_t)_{t\in \mathbb{R}}$ is the unique solution to (\ref{fractionalSDDE}) which is stationary, centered, square integrable, and satisfies
%\begin{align}\label{purelyNon}
%\bigcap_{t\in \mathbb{R}} \overline{\text{span}(\{X_s\, :\, s \leq t\})} = \{0\},
%\end{align} 
%see \cite[Section~4]{QOU}. (Here the notation $\overline{A}$ is used for the closure of a subset $A\subseteq L^2(\mathbb{P})$.) 
Note that we cannot hope to get a uniqueness result without imposing a condition such as (\ref{purelyNon}). For instance, the fact that
\begin{align*}
\int_{-\infty}^t\big[(t-u)^{-\beta}-(s-u)_+^{-\beta}\big] \, du= 0,
\end{align*}
shows together with (\ref{fractionalSDDE}) that $(X_t + U)_{t\in \mathbb{R}}$ is a solution for any $U\in L^1(\mathbb{P})$ as long as $(X_t)_{t\in \mathbb{R}}$ is a solution. Moreover, uniqueness relative to condition (\ref{purelyNon}) is similar to that of discrete-time ARFIMA processes, see \cite[Theorem~13.2.1]{BrockDavis}.
\end{remark}

\begin{remark}\label{alphaStable}
It is possible to generalize (\ref{fractionalSDDE}) and Theorem~\ref{existenceTheorem} to allow for a heavy-tailed distribution of the noise. Specifically, suppose that $(L_t)_{t\in \mathbb{R}}$ is a symmetric $\alpha$-stable L\'{e}vy process for some $\alpha \in (1,2)$, that is, $(L_t)_{t\in \mathbb{R}}$ is a L\'{e}vy process and
\begin{align*}
\mathbb{E}\big[e^{iy L_1}\big] = e^{-\sigma^\alpha \vert y \vert^\alpha},\quad y \in \mathbb{R},
\end{align*}
for some $\sigma >0$. To define the process $(X_t)_{t\in \mathbb{R}}$ in (\ref{solution}) it is necessary and sufficient that $g\in L^\alpha$, which is indeed the case if $\beta \in (1,1-1/\alpha)$ by Proposition~\ref{kernels}. From this point, using (\ref{functionalEquation}), we only need a stochastic Fubini result (which can be found in \cite[Theorem~3.1]{QOU}) to verify that (\ref{fractionalSDDE}) is satisfied. One will need another notion (and proof) of uniqueness, however, as our approach relies on $L^2$ theory. For more on stable distributions and corresponding definitions and results, we refer to \cite{Stable}.
\end{remark}

\begin{remark}\label{fractionalModels}
The process (\ref{solution}) and other well-known long-memory processes do naturally share parts of their construction. For instance, they are typically viewed as "borderline"\ stationary solutions to certain equations. To be more concrete, the ARFIMA process can be viewed as an ARMA process, but where the autoregressive polynomial $P$ is replaced by $\tilde{P}:z \mapsto P(z)(1-z)^\beta$. Although an ordinary ARMA process exists if and only if $P$ is non-zero on the unit circle (and, in the positive case, will be a short memory process), the autoregressive function $\tilde{P}$ of the ARFIMA model will always have a root at $z=1$. The analogue to the autoregressive polynomial in the non-fractional SDDE model (that is, (\ref{fractionalSDDE}) with $D^\beta_- \mathds{1}_{(s,t]}$ replaced by $\mathds{1}_{(s,t]}$) is
\begin{align}\label{hBar}
 z \mapsto -z - \mathcal{L}[\eta](z),
\end{align}
where the critical region is on the imaginary axis $\{iy \, :\, y \in \mathbb{R}\}$ rather than on the unit circle $\{z \in \mathbb{C}\, :\, \vert z \vert = 1\}$ (see \cite{contARMAframework}). The SFDDE corresponds to replacing (\ref{hBar}) by $z \mapsto -z -(-z)^\beta \mathcal{L}[\eta](z)$, which will always have a root at $z=0$. However, to ensure existence both in the ARFIMA model and in the SFDDE model, assumptions are made such that these roots will be the only ones in the critical region and their order will be $\beta$. For a treatment of ARFIMA processes, we refer to \cite[Section~13.2]{BrockDavis}.
\end{remark}

The solution $(X_t)_{t\in \mathbb{R}}$ of Theorem~\ref{existenceTheorem} is causal in the sense that $X_t$ only depends on past increments of the noise $L_t-L_s$, $s\leq t$. An inspection of the proof of Theorem~\ref{existenceTheorem} reveals that one only needs to require that $h(iy) \neq 0$ for all $y\in \mathbb{R}$ for a (possibly non-causal) stationary solution to exist. The difference between the condition that $h(z)$ is non-zero when $\text{Re}(z) = 0$ rather than when $\text{Re}(z)\leq 0$ in terms of causality is similar to that of non-fractional SDDEs (see, e.g., \cite{contARMAframework}).

The next result shows why one may view (\ref{fractionalSDDE}) as (\ref{fractionalSDDEcompact}). In particular, it reveals that the corresponding solution $(X_t)_{t\in \mathbb{R}}$ is a semimartingale with respect to (the completion of) its own filtration or equivalently, in light of (\ref{fractionalSDDE}) and (\ref{solution}), the one generated from the increments of $(L_t)_{t\in \mathbb{R}}$.
\begin{proposition}\label{fractionalX}
Suppose that $h(z)$ is non-zero for every $z\in \mathbb{C}$ with $\text{Re}(z)\leq 0$ and let $(X_t)_{t\in \mathbb{R}}$ be the solution to (\ref{fractionalSDDE}) given in Theorem~\ref{existenceTheorem}. Then, for $t\in \mathbb{R}$, the limit
\begin{align}\label{dAlphaX}
D^\beta X_t :=  \frac{\beta}{\Gamma (1-\beta)} \lim_{\delta \downarrow 0}\int_\delta^\infty \frac{X_t-X_{t-u}}{u^{1+\beta}}\, du
\end{align}
exists in $L^2(\mathbb{P})$, $D^\beta X_t = \int_{-\infty}^t D^\beta g (t-u)\, dL_u$, and it holds that
\begin{align}\label{fractionalChange}
\begin{aligned}
\MoveEqLeft\frac{1}{\Gamma (1-\beta)}\int_{-\infty}^t\big[(t-u)^{-\beta}-(s-u)_+^{-\beta} \big] \int_{[0,\infty)} X_{u-v}\, \eta (dv)\, du \\
&= \int_s^t \int_{[0,\infty)}D^\beta X_{u-v}\, \eta (dv)\, du
\end{aligned}
\end{align}
almost surely for each $s<t$.
\end{proposition}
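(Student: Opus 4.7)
The plan is to prove Proposition~\ref{fractionalX} in two stages. In the first stage, I would establish that the Marchaud limit defining $D^\beta X_t$ exists in $L^2(\mathbb{P})$ and coincides with $\int_{-\infty}^t D^\beta g(t-u)\, dL_u$. In the second stage, I would insert this moving-average representation into both sides of (\ref{fractionalChange}), swap integrals by stochastic Fubini, and reduce the claim to the deterministic identities (\ref{functionalRelation})--(\ref{functionalEquation}) of Proposition~\ref{kernels}.

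For the first stage, extend $g$ by zero on $(-\infty,0)$ so that $X_t - X_{t-u} = \int_{-\infty}^t [g(t-s) - g(t-s-u)]\, dL_s$. A stochastic Fubini argument, valid for each $\delta > 0$ since the Plancherel norm of $s \mapsto g(t-s)-g(t-s-u)$ is controlled via $|1-e^{iyu}|\,|\mathcal{F}[g](y)|$ together with the explicit form $\mathcal{F}[g](y) = (-iy)^{-\beta}/h(iy)$, yields
\begin{align*}
\frac{\beta}{\Gamma(1-\beta)}\int_\delta^\infty \frac{X_t - X_{t-u}}{u^{1+\beta}}\, du = \int_{-\infty}^t K_\delta(t-s)\, dL_s,
\end{align*}
where $K_\delta(r) := \frac{\beta}{\Gamma(1-\beta)}\int_\delta^\infty [g(r)-g(r-u)]u^{-1-\beta}\, du$. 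By the L\'{e}vy isometry it now suffices to prove $K_\delta \to D^\beta g$ in $L^2$. Pointwise convergence is Proposition~\ref{kernels}(\ref{dAlphaLimit}); on the Fourier side, $\mathcal{F}[K_\delta](y) = \mathcal{F}[g](y)\, c_\delta(y)$ with $c_\delta(y) = \frac{\beta}{\Gamma(1-\beta)}\int_\delta^\infty (1-e^{iyu})u^{-1-\beta}\, du$. Splitting the $u$-integral at $u=2/|y|$ and using $|1-e^{iyu}| \leq \min(|y|u,2)$ gives the uniform bound $|c_\delta(y)| \leq C|y|^\beta$. Since $|y|^{2\beta}|\mathcal{F}[g](y)|^2 = |h(iy)|^{-2} = |\mathcal{F}[D^\beta g](y)|^2$ is integrable, dominated convergence through Plancherel delivers the required $L^2$ convergence and hence $L^2(\mathbb{P})$ convergence of the stochastic integrals.

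For the second stage, insert $X_{u-v} = \int_\mathbb{R} g(u-v-w)\, dL_w$ on the left-hand side of (\ref{fractionalChange}) and $D^\beta X_{u-v} = \int_\mathbb{R} D^\beta g(u-v-w)\, dL_w$ on the right-hand side, then apply stochastic Fubini across the Lebesgue integration in $u$, the $\eta$-integration in $v$, and the stochastic integration in $w$ (integrability being inherited from (\ref{finiteEta}), $g \in L^2$ and $D^\beta g \in L^2$). The left-hand side becomes $\int_\mathbb{R}\varphi(w)\, dL_w$ with
\begin{align*}
\varphi(w) = \int_{-\infty}^t \big(D^\beta_-\mathds{1}_{(s,t]}\big)(u)\, (g\ast \eta)(u-w)\, du,
\end{align*}
and Proposition~\ref{kernels}(\ref{functionalEquation}) applied with the auxiliary variable $v$ replaced by $w$ gives $\varphi(w) = g(t-w) - g(s-w) - \mathds{1}_{(s,t]}(w)$. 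The right-hand side becomes $\int_\mathbb{R}\psi(w)\, dL_w$ with $\psi(w) = \int_s^t (D^\beta g\ast \eta)(u-w)\, du$; a short case analysis in the position of $w$ relative to $[s,t]$, using Proposition~\ref{kernels}(\ref{functionalRelation}) together with $g(0) = 1$ and $D^\beta g \ast \eta \equiv 0$ on $(-\infty,0)$, yields $\psi(w) = \varphi(w)$. The isometry then forces the two stochastic integrals to agree in $L^2(\mathbb{P})$, hence almost surely.

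The principal obstacle is the passage to the $\delta \downarrow 0$ limit inside the stochastic integral in the first stage: the Fubini interchange itself is routine, but upgrading pointwise to $L^2$ convergence of the deterministic kernels relies on the explicit Fourier-side control of $c_\delta(y)$ and the crucial cancellation $|y|^\beta |\mathcal{F}[g](y)| = |\mathcal{F}[D^\beta g](y)|$ supplied by Proposition~\ref{kernels}. Once that identification is secured, the second stage is essentially a bookkeeping exercise with the functional identities already established for $g$.
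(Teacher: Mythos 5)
Your proposal is correct, and its overall architecture --- stochastic Fubini to write the truncated Marchaud derivative as a stochastic integral of a truncated kernel, then $L^2$ convergence of those deterministic kernels, then reduction of (\ref{fractionalChange}) to a deterministic convolution identity --- is the same as the paper's. The one place where you genuinely diverge is the key convergence step. The paper establishes that $(D^\beta_\delta g)_{\delta>0}$ is Cauchy in $L^2$ by a detour through the special case where $(L_t)_{t\in\mathbb{R}}$ is a Brownian motion: there the solution is locally H\"{o}lder continuous, so the Marchaud integral converges pathwise, the stochastic integrals converge in probability, and the isometry then forces the kernels to be $L^2$-Cauchy; since that conclusion is purely deterministic, it is transported back to general L\'{e}vy noise. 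You instead prove the $L^2$ convergence directly on the Fourier side, writing $\mathcal{F}[K_\delta]=c_\delta\cdot\mathcal{F}[g]$, bounding $\vert c_\delta(y)\vert\leq C\vert y\vert^\beta$ uniformly in $\delta$ by splitting the $u$-integral at $2/\vert y\vert$, and dominating $\vert\mathcal{F}[K_\delta]\vert^2$ by a multiple of $\vert h(i\cdot)\vert^{-2}=\vert\mathcal{F}[D^\beta g]\vert^2\in L^1$; the cancellation $\vert y\vert^\beta\vert\mathcal{F}[g](y)\vert=\vert h(iy)\vert^{-1}$ from (\ref{kernelChar}) is exactly what makes this work. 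This is more self-contained and avoids the slightly delicate ``probe with Brownian motion'' device, at the modest cost of verifying the pointwise limit $c_\delta(y)\to(-iy)^\beta$. In the second stage you replace the paper's Fourier-transform comparison of $\int_\mathbb{R}(D^\beta_-\mathds{1}_{(s,t]})(u)\,g\ast\eta(u-w)\,du$ with $\int_s^t(D^\beta g)\ast\eta(u-w)\,du$ by a direct case analysis in $w$ based on (\ref{functionalRelation})--(\ref{functionalEquation}); the two routes are equivalent (the needed identity is in fact already recorded in the proof of Proposition~\ref{kernels}), and your bookkeeping there checks out.
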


We will now provide some properties of the solution $(X_t)_{t\in \mathbb{R}}$ to (\ref{fractionalSDDE}) given in (\ref{solution}). Since the autocovariance function $\gamma_X$ takes the form
\begin{align}\label{autocovariance}
\gamma_X (t)  = \int_\mathbb{R} g(t+u)g(u)\, du, \quad t \in \mathbb{R},
\end{align}
it follows by Plancherel's theorem that $(X_t)_{t\in \mathbb{R}}$ admits a spectral density $f_X$ which is given by
\begin{align}\label{spectralDensity}
f_X (y) = \vert \mathcal{F}[g](y)\vert^2 = \frac{1}{\vert h(iy)\vert^2} \vert y \vert^{-2\beta}, \quad y \in \mathbb{R}.
\end{align}
(See the appendix for a brief recap of the spectral theory.) The following result concerning $\gamma_X$ and $f_X$ shows that solutions to (\ref{fractionalSDDE}) exhibit a long-memory behavior and that the degree of memory can be controlled by $\beta$. 

\begin{proposition}\label{memory} Suppose that $h(z)$ is non-zero for every $z\in \mathbb{C}$ with $\text{Re}(z)\leq 0$ and let $\gamma_X$ and $f_X$ be the functions introduced in (\ref{autocovariance})-(\ref{spectralDensity}). Then it holds that
\begin{align*}
\gamma_X(t) \sim \frac{\Gamma (1-2\beta)}{\Gamma (\beta) \Gamma (1-\beta) \eta ([0,\infty))^2} t^{2\beta -1} 
\quad \text{and}\quad 
f_X(y) \sim \frac{1}{\eta ([0,\infty))^2} \vert y\vert^{-2\beta}
\end{align*}
as $t\to \infty$ and $y\to 0$, respectively. In particular, $\int_\mathbb{R}\vert \gamma_X (t)\vert\, dt = \infty$.
\end{proposition}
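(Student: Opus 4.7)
The plan is to handle the spectral-density asymptotic first by direct computation, and then deduce the autocovariance asymptotic by a Tauberian argument on the inverse Fourier transform. The non-integrability claim will be immediate from the autocovariance asymptotic.

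For the spectral density near $0$: from (\ref{hFunction}) we have $h(iy) = (-iy)^{1-\beta} - \int_{[0,\infty)}e^{iyu}\,\eta(du)$. As $y\to 0$, the first term tends to $0$ because $1-\beta>0$, and by dominated convergence (using that $\eta$ is a finite signed measure) the second term tends to $\eta([0,\infty))$. Hence $h(iy)\to -\eta([0,\infty))$, and (\ref{spectralDensity}) gives $f_X(y)\sim \eta([0,\infty))^{-2}|y|^{-2\beta}$ as $y\to 0$.

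For the autocovariance at infinity, I would use the inverse Fourier representation $\gamma_X(t) = \frac{1}{\pi}\int_0^\infty f_X(y)\cos(ty)\,dy$ (valid since $f_X$ is a spectral density that is even and $\gamma_X\in L^2$, with $f_X\in L^2$ by the bound $|h(iy)|\gtrsim |y|^{1-\beta}$ for large $|y|$). Substituting $y=s/t$ yields
\begin{align*}
t^{1-2\beta}\gamma_X(t) \;=\; \frac{1}{\pi}\int_0^\infty t^{-2\beta}f_X(s/t)\cos(s)\,ds.
\end{align*}
By the first part, the integrand converges pointwise to $\eta([0,\infty))^{-2}\,s^{-2\beta}\cos(s)$. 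To pass the limit inside, I would construct a dominating envelope by splitting the integral at $s=1$: for small $s/t$, use continuity of $y\mapsto 1/|h(iy)|^2$ near $0$ to bound $t^{-2\beta}f_X(s/t)\le C s^{-2\beta}$ for $s\le 1$; for large $s/t$, use $|h(iy)|^2\gtrsim |y|^{2(1-\beta)}$ to bound $t^{-2\beta}f_X(s/t)\le C t^{2-4\beta}s^{-2+2\beta-2\beta}\lesssim s^{-2}$ uniformly in $t\ge 1$, which together with the rapid oscillation of $\cos$ can be controlled by integration by parts to produce an integrable majorant. Dominated convergence then gives
\begin{align*}
t^{1-2\beta}\gamma_X(t)\;\longrightarrow\;\frac{1}{\pi\,\eta([0,\infty))^2}\int_0^\infty s^{-2\beta}\cos(s)\,ds \;=\; \frac{\Gamma(1-2\beta)\sin(\beta\pi)}{\pi\,\eta([0,\infty))^2}\,,
\end{align*}
using the classical cosine transform of $s^{-2\beta}$ (valid for $2\beta\in(0,1)$). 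The reflection formula $\Gamma(\beta)\Gamma(1-\beta)=\pi/\sin(\beta\pi)$ rewrites the right-hand side as $\Gamma(1-2\beta)/(\Gamma(\beta)\Gamma(1-\beta)\eta([0,\infty))^2)$, which is the claimed asymptotic.

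Finally, since $\gamma_X(t)\sim c\,t^{2\beta-1}$ with $c\ne 0$ and $2\beta-1>-1$, the function $|\gamma_X|$ is not integrable at infinity, giving $\int_\mathbb{R}|\gamma_X(t)|\,dt=\infty$. The main obstacle is justifying the dominated convergence in the Tauberian step; alternatively, one could bypass this by first extracting the asymptotic $g(t)\sim -[\Gamma(\beta)\eta([0,\infty))]^{-1}t^{\beta-1}$ from the relation $\mathcal{F}[g](y)=(-iy)^{-\beta}/h(iy)$ near $y=0$, and then inserting this into $\gamma_X(t)=\int_0^\infty g(t+u)g(u)\,du$, where a scaling substitution $u=tv$ produces the Beta-integral $B(\beta,1-2\beta)=\Gamma(\beta)\Gamma(1-2\beta)/\Gamma(1-\beta)$ and yields the same constant.
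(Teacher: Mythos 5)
Your treatment of $f_X$ near zero is fine and coincides with the paper's (both just use $h(iy)\to h(0)=-\eta([0,\infty))$). For $\gamma_X$ you take a genuinely different route: the paper stays in the time domain, using Proposition~\ref{kernels}(\ref{riemannLiouvilleg}) to write $g$ as the Riemann--Liouville integral of $D^\beta g\in L^1\cap L^2$ and the identity $\int_{u\vee v}^\infty(s-u)^{\beta-1}(s-v)^{\beta-1}\,ds=\tfrac{\Gamma(\beta)\Gamma(1-2\beta)}{\Gamma(1-\beta)}|u-v|^{2\beta-1}$ to reduce the claim to $\int_\mathbb{R}\gamma(u)|u-t|^{2\beta-1}\,du\sim t^{2\beta-1}\int_\mathbb{R}\gamma(u)\,du$ with $\gamma(u)=\int_\mathbb{R}D^\beta g(u+v)D^\beta g(v)\,dv\in L^1$, whereas you rescale the Fourier inversion integral. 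Your constant bookkeeping is correct: $\int_0^\infty s^{-2\beta}\cos s\,ds=\Gamma(1-2\beta)\sin(\pi\beta)$ together with the reflection formula reproduces the stated constant, and the final deduction that $\gamma_X(t)\sim ct^{2\beta-1}$ with $2\beta-1>-1$ forces $\int_\mathbb{R}|\gamma_X(t)|\,dt=\infty$ is fine.

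There is, however, a genuine gap at exactly the step you flag as the main obstacle, and your proposed fix does not work as written. The limit integral $\int_0^\infty s^{-2\beta}\cos s\,ds$ is only conditionally convergent, so no integrable majorant of $|t^{-2\beta}f_X(s/t)\cos s|$ on $(1,\infty)$ can exist, and your claimed bound there is miscomputed: since $\inf_{y}|h(iy)|>0$, the sharp pointwise estimate valid on the whole far region is $t^{-2\beta}f_X(s/t)=|h(is/t)|^{-2}s^{-2\beta}\le Cs^{-2\beta}$, while on $s\gtrsim t$ the large-argument bound $|h(iy)|\gtrsim|y|^{1-\beta}$ gives $t^{-2\beta}f_X(s/t)\lesssim t^{2-2\beta}s^{-2}\le C's^{-2\beta}$ again --- not $s^{-2}$, and not integrable at infinity. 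Hence dominated convergence cannot be applied as stated; everything rests on exploiting the oscillation of $\cos$, i.e.\ on an integration by parts in which one must differentiate $s\mapsto f_X(s/t)$ and control $h'$ on the imaginary axis uniformly as $t\to\infty$. This is precisely where assumption (\ref{finiteEta}) enters and where the paper expends most of its effort (absolute continuity of $|h(i\cdot)|^{-2}$ with $L^1$ density, the Riemann--Lebesgue lemma on the piece $u>t/2$, and an explicit integration by parts on $(0,t/2)$). Keep in mind that a pointwise asymptotic $f_X(y)\sim c|y|^{-2\beta}$ at the origin does not by itself imply $\gamma_X(t)\sim c't^{2\beta-1}$; Abelian/Tauberian results of this kind require additional regularity, so this step must actually be carried out rather than deferred. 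The same caveat applies to your alternative sketch via $g(t)\sim -[\Gamma(\beta)\eta([0,\infty))]^{-1}t^{\beta-1}$: extracting that asymptotic from $\mathcal{F}[g]$ and substituting it into $\int_0^\infty g(t+u)g(u)\,du$ both need uniform control, which is essentially what the paper's Riemann--Liouville representation supplies.
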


While the behavior of $\gamma_X(t)$ as $t\to \infty$ is controlled by $\beta$, the content of Proposition~\ref{roughness} is that the behavior of $\gamma_X (t)$ as $t\to 0$, and thus the $L^2(\mathbb{P})$-H\"{o}lder continuity of the sample paths of $(X_t)_{t\in \mathbb{R}}$ (cf. Remark~\ref{holderRem}), is unaffected by $\beta$.

\begin{proposition}\label{roughness} Suppose that $h (z)$ is non-zero for every $z\in \mathbb{C}$ with $\text{Re}(z)\leq 0$, let $(X_t)_{t\in \mathbb{R}}$ be the solution to (\ref{fractionalSDDE}) and denote by $\rho_X$ its ACF. Then it holds that $1-\rho_X(t) \sim t$ as $t \downarrow 0$.
\end{proposition}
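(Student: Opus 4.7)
The strategy is to exploit the semimartingale decomposition of $X$ provided by Proposition~\ref{fractionalX}. Setting
\begin{equation*}
V_u := \int_{[0,\infty)} D^\beta X_{u-v}\, \eta(dv),
\end{equation*}
that proposition yields $X_t - X_0 = \int_0^t V_u\, du + L_t$ almost surely for $t > 0$. Since $(X_t)_{t\in\mathbb{R}}$ is stationary, $\gamma_X(0) - \gamma_X(t) = \tfrac{1}{2}\mathbb{E}[(X_t - X_0)^2]$, so the task reduces to producing a sharp asymptotic of this squared $L^2(\mathbb{P})$-increment as $t\downarrow 0$.

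Expanding the square gives three contributions. The L\'evy isometry together with the normalization $\mathbb{E}[L_1^2] = 1$ gives $\mathbb{E}[L_t^2] = t$ exactly. The ``smooth'' term is handled by Cauchy--Schwarz and the stationarity of $(V_u)_{u \in \mathbb{R}}$ (which is inherited from that of $(X_u)_{u\in \mathbb{R}}$ via Proposition~\ref{fractionalX}):
\begin{equation*}
\mathbb{E}\Bigl[\Bigl(\int_0^t V_u\, du\Bigr)^2\Bigr] \leq t \int_0^t \mathbb{E}[V_u^2]\, du = t^2\, \mathbb{E}[V_0^2].
\end{equation*}
A further application of Cauchy--Schwarz bounds the cross term by $O(t^{3/2})$. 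Summing the three pieces gives $\mathbb{E}[(X_t - X_0)^2] = t + O(t^{3/2})$, whence $\gamma_X(0) - \gamma_X(t) \sim t/2$; dividing through by $\gamma_X(0)$ delivers the claimed linear-in-$t$ behavior of $1 - \rho_X(t)$.

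The principal obstacle is verifying $\mathbb{E}[V_0^2] < \infty$, since without this the smooth-term bound is vacuous. I would handle it by applying Jensen's inequality against the finite signed measure $\eta$,
\begin{equation*}
\mathbb{E}[V_0^2] \leq |\eta|([0,\infty)) \int_{[0,\infty)} \mathbb{E}\bigl[(D^\beta X_{-v})^2\bigr]\, |\eta|(dv),
\end{equation*}
and then using the stationarity of $(D^\beta X_u)_{u \in \mathbb{R}}$ together with the representation $D^\beta X_t = \int_{-\infty}^t D^\beta g(t-u)\, dL_u$ from Proposition~\ref{fractionalX} and the L\'evy isometry to conclude $\mathbb{E}[(D^\beta X_0)^2] = \|D^\beta g\|_{L^2}^2$; the latter norm is finite by Proposition~\ref{kernels}(\ref{dAlphaLimit}). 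Modulo this integrability check, the entire argument collapses to two applications of Cauchy--Schwarz on the semimartingale decomposition, the non-trivial point being that the absolutely continuous drift contributes only at order $t^{3/2}$ and hence is negligible compared with the order-$t$ contribution of the driving L\'evy noise.
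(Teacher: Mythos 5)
Your argument is correct, but it follows a genuinely different route from the paper. The paper works entirely on the spectral side: it writes $\mathbb{E}[(X_t-X_0)^2]/t = \int_\mathbb{R}\vert 1-e^{iy}\vert^2 f_X(y/t)\, t^{-2}\, dy$, substitutes the explicit form of $f_X$ from (\ref{spectralDensity}), and then justifies passage to the limit $t\downarrow 0$ by a two-case domination argument (splitting at $\vert y\vert = C_1 t$ and using that $h$ is bounded away from zero near the origin) before invoking dominated convergence. You instead exploit the semimartingale decomposition $X_t - X_0 = \int_0^t V_u\, du + L_t$ from Proposition~\ref{fractionalX} (cf.\ Remark~\ref{jumps}) and show that the absolutely continuous drift contributes only $O(t^2)$ to the squared increment while the L\'{e}vy part contributes exactly $t$; the cross term is then $O(t^{3/2})$ by Cauchy--Schwarz. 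All the ingredients you need are already established in the paper and involve no circularity: $\mathbb{E}[(D^\beta X_0)^2]=\Vert D^\beta g\Vert_{L^2}^2<\infty$ by Proposition~\ref{kernels}(\ref{dAlphaLimit}) and the moving-average representation of $D^\beta X$, and your Cauchy--Schwarz bound $\mathbb{E}[V_0^2]\leq \vert\eta\vert([0,\infty))^2\Vert D^\beta g\Vert_{L^2}^2$ against the finite measure $\vert\eta\vert$ is sound. Your route is arguably the more illuminating one, since it makes transparent \emph{why} the local behavior is unaffected by $\beta$: the drift is absolutely continuous with a stationary square-integrable density, so the small-time increment is governed by the driving noise alone; the price is that it leans on the full strength of Proposition~\ref{fractionalX}, whereas the paper's computation only needs the spectral density. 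One cosmetic remark: both your argument and the paper's actually yield $1-\rho_X(t)\sim t/(2\gamma_X(0))$, i.e.\ linear decay with constant $1/(2\gamma_X(0))$ rather than $1$, so you match the paper's content exactly; the discrepancy with the literal statement is in the paper's normalization, not in your proof.
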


\begin{remark}\label{holderRem}
Recall that for a given $\gamma >0$, a centered and square integrable process $(X_t)_{t\in \mathbb{R}}$ with stationary increments is said to be locally $\gamma$-H\"{o}lder continuous in $L^2(\mathbb{P})$ if there exists a constant $C>0$ such that 
\begin{align*}
\frac{\mathbb{E}\big[(X_t-X_0)^2\big]}{t^{2\gamma}} \leq C
\end{align*}
for all sufficiently small $t>0$. By defining the semi-variogram 
\begin{align*}
\gamma_V (t):= \tfrac{1}{2}\mathbb{E}[(X_t - X_0)^2], \quad t \in \mathbb{R},
\end{align*}
we see that $(X_t)_{t\in \mathbb{R}}$ is locally $\gamma$-H\"{o}lder continuous if and only if $\gamma_V (t) = O(t^{2\gamma})$ as $t \to 0$. When $(X_t)_{t\in \mathbb{R}}$ is stationary we have the relation $\gamma_V = \gamma_X (0) (1-\rho_X )$, from which it follows that the $L^2(\mathbb{P})$ notion of H\"{o}lder continuity can be characterized in terms of the behavior of the ACF at zero. In particular, Proposition~\ref{roughness} shows that the solution $(X_t)_{t\in \mathbb{R}}$ to (\ref{fractionalSDDE}) is locally $\gamma$-H\"{o}lder continuous if and only if $\gamma \leq 1/2$. The behavior of the ACF at zero has been used as a measure of roughness of the sample paths in for example \cite{bennedsen2015rough,bennedsen2016decoupling}. 
\end{remark}

\begin{remark}\label{jumps} As a final comment on the path properties of the solution $(X_t)_{t\in \mathbb{R}}$ to (\ref{fractionalSDDE}), observe that
\begin{align*}
X_t -X_s = \int_s^t \int_{[0,\infty)} D^\beta X_{u-v}\, \eta (dv)\, du  + L_t -L_s
\end{align*}
for each $s<t$ almost surely by Proposition~\ref{fractionalX}. This shows that $(X_t)_{t\in \mathbb{R}}$ can be chosen so that it has jumps at the same time (and of the same size) as $(L_t)_{t\in \mathbb{R}}$. This is in contrast to models driven by a fractional L\'{e}vy process, such as (\ref{usualFracSDDE}), since $(I^\beta L_t)_{t\in \mathbb{R}}$ is continuous in $t$ (see \cite[Theorem~3.4]{Tina}).
\end{remark}

We end this section by providing a formula for computing $\mathbb{E}[X_t\mid X_u,\, u \leq s]$ for any $s<t$. One should compare its form to those obtained for other fractional models (such as the one in \cite[Theorem~3.2]{basse2018multivariate} where, as opposed to Proposition~\ref{prediction}, the prediction is expressed not only in terms of its own past, but also the past noise).

\begin{proposition}\label{prediction}
Suppose that $h(z)$ is non-zero for every $z\in \mathbb{C}$ with $\text{Re}(z)\leq 0$ and let $(X_t)_{t\in \mathbb{R}}$ denote the solution to (\ref{fractionalSDDE}). Then for any $s<t$, it holds that
\begin{align*}
\begin{aligned}
\MoveEqLeft\mathbb{E}[X_t\mid X_u,\, u\leq s] = g(t-s)X_s\\
&+\int_{[0,t-s)} \int_{-\infty}^s X_w  \int_{[0,\infty)}\big(D^\beta_-\mathds{1}_{(s,t-u]}\big)(v+w)  \,\eta(dv) \, dw \, g(du),
\end{aligned}
\end{align*}
where $g (du) = \delta_0 (du) + (D^\beta g)\ast \eta (u)\, du$ is the Lebesgue-Stieltjes measure induced by $g$.
\end{proposition}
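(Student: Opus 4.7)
The plan is to identify $m_t := \mathbb{E}[X_t\mid X_u,\, u\leq s]$ as the unique solution of a Volterra-type integral equation on $t>s$ and to verify that the stated formula solves it, using the ``resolvent'' identities for $g$ collected in Proposition~\ref{kernels}.

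\textbf{Step 1 (projection).} Since $(X_t)_{t\in \mathbb{R}}$ is purely non-deterministic, the filtration $\sigma(X_u:u\leq s)$ agrees modulo $\mathbb{P}$-null sets with the one generated by the increments $(L_b-L_a)_{a\leq b\leq s}$. Combined with the moving-average representation (\ref{solution}) and the fact that $L$ has independent, centred, square-integrable increments, this gives $m_t = \int_{-\infty}^s g(t-u)\, dL_u$ for $t>s$, while $m_t=X_t$ for $t\leq s$. Taking $\mathcal{F}_s$-conditional expectation of the SFDDE (\ref{fractionalSDDE}) and applying a stochastic Fubini argument (cf.\ \cite[Theorem~3.1]{QOU}) then yields
\[m_t = X_s + \int_{-\infty}^t \big(D^\beta_-\mathds{1}_{(s,t]}\big)(r)\int_{[0,\infty)} m_{r-v}\, \eta(dv)\, dr, \qquad t>s.\]

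\textbf{Step 2 (separating known from unknown).} Split the outer integral at $r=s$ and, on $r\in(s,t]$, the inner $\eta$-integral at $v=r-s$. On the two pieces where $r-v\leq s$ one has $m_{r-v}=X_{r-v}$; after Fubini, the change of variables $w=r-v$, and the explicit form (\ref{fracIndicator}) of $D^\beta_-\mathds{1}_{(s,t]}$, these contributions combine into precisely $\Phi(0)$ of the statement. Using $\big(D^\beta_-\mathds{1}_{(s,t]}\big)(r)=(t-r)^{-\beta}/\Gamma(1-\beta)$ for $r\in(s,t]$, the remaining unknown piece delivers the weakly-singular linear Volterra equation
\[m_t - X_s - \Phi(0) = \int_s^t \frac{(t-r)^{-\beta}}{\Gamma(1-\beta)}\int_{[0,r-s)} m_{r-v}\, \eta(dv)\, dr, \qquad t>s.\]

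\textbf{Step 3 (verification of the ansatz).} Substitute $m_\tau = g(\tau-s)X_s + \int_{[0,\tau-s)}\Phi_\tau(u)\, g(du)$ (with the obvious $\Phi_\tau$ extending $\Phi=\Phi_t$) together with $g(du) = \delta_0(du) + (D^\beta g)\ast\eta(u)\, du$ into the Volterra equation. The equation splits into two pieces. The coefficient of $X_s$ reduces, via the semigroup property of Riemann–Liouville integrals applied to $g$ written as in (\ref{riemannLiouvilleg}), to the defining relation (\ref{functionalRelation}) $g(t-s)=1+\int_0^{t-s}(D^\beta g)\ast\eta(u)\, du$. The $\Phi$-coefficient amounts to a convolutional identity that becomes transparent on the Fourier side: integration by parts using $g(0)=1$ and $g(\infty)=0$ gives $\mathcal{F}[g(du)](y)=(-iy)\mathcal{F}[g](y)$, while $\mathcal{F}[D^\beta g](y)=(-iy)^{\beta}\mathcal{F}[g](y)$ follows by combining Proposition~\ref{kernels}(\ref{dAlphaLimit}) with (\ref{kernelChar}); both expressions then reduce to $(-iy)^{\beta}\mathcal{F}[g](y)\cdot\mathcal{F}[\Phi(t-s-\cdot\,)](y)$. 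A standard Gronwall argument for weakly-singular linear Volterra equations supplies uniqueness, completing the identification.

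\textbf{Main obstacle.} The delicate point is executing Step~3 cleanly despite the atom of $g(du)$ at $0$ and the non-integrability of $D^\beta g$ near $0$, which make direct Stieltjes manipulations error-prone. The Fourier detour sidesteps these singular computations, and the summability $g\in L^\gamma$ for $(1-\beta)^{-1}<\gamma\leq 2$ from Proposition~\ref{kernels}, combined with the finiteness condition (\ref{finiteEta}) on $\eta$, supplies the integrability required to validate every Fubini and stochastic Fubini interchange used along the way.
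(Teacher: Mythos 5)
Your argument is correct in outline and reaches the result by a genuinely different route from the paper. You condition the SFDDE on $\sigma(X_u,\,u\le s)$ first, turning the problem into a weakly singular linear Volterra equation for the predictor with kernel $(t-r)^{-\beta}/\Gamma(1-\beta)$ and delay measure $\eta$, and then verify the stated formula as its unique solution. The two coefficient identities you need do check out: the $X_s$-part reduces to (\ref{functionalRelation}) via the semigroup property $I^{1-\beta}I^{\beta}=I^{1}$ applied to $g=I^{\beta}D^{\beta}g$ from Proposition~\ref{kernels}(\ref{riemannLiouvilleg}), and the $\Phi$-part follows from $\mathcal{F}[g(du)](y)=(-iy)\mathcal{F}[g](y)=(-iy)^{1-\beta}/h(iy)$ together with $\mathcal{F}[D^\beta g](y)=1/h(iy)$; the generalized Gronwall inequality for kernels $(t-r)^{-\beta}$ with $\beta<1$ does give uniqueness in $L^1(\mathbb{P})$ on compacts since both candidate solutions are bounded there. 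The paper instead works pathwise: it takes the bilateral Laplace transform of $t\mapsto X_t\mathds{1}_{(s,\infty)}(t)$, manipulates the resulting algebraic identity into the variation-of-constants formula (\ref{varOfConst}), $X_t=g(t-s)X_s+\int_s^t g(t-u)\,dL_u+\cdots$ almost surely for each $s<t$, and only then conditions, at which point the stochastic-integral term drops out. Your route avoids justifying pathwise Laplace transforms but pays for it with a separate uniqueness step and a verification (your Step~3) that you leave schematic, whereas the paper's route yields the stronger pathwise decomposition (\ref{varOfConst}) as a by-product, from which the prediction formula is immediate. A small remark: the identification $\mathbb{E}[X_t\mid X_u,\,u\le s]=\int_{-\infty}^s g(t-u)\,dL_u$ in your Step~1 is true, but the filtration equality it rests on comes from the causal moving-average representation combined with the SFDDE itself rather than from the purely non-deterministic property, and it is not actually needed for Steps~2--3 beyond supplying integrability.
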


\section{Delays of exponential type}\label{expSection}
Let $A$ be an $n\times n$ matrix where all its eigenvalues belong to $\{z\in \mathbb{C}\, :\, \text{Re}(z)<0\}$, and let $b\in \mathbb{R}^n$ and $\kappa\in \mathbb{R}$. In this section we restrict our attention to measures $\eta$ of the form
\begin{align}\label{exponentialEta}
\eta (du) = -\kappa \delta_0 (du) + f(u)\, du, \quad \text{with}\quad  f(u) = b^T e^{Au}e_1,
\end{align}
where $e_1 := (1,0,\dots, 0)^T \in \mathbb{R}^n$. Note that $e_1$ is used as a normalization; the effect of replacing $e_1$ by any $c\in \mathbb{R}^n$ can be incorporated in the choice of $A$ and $b$. It is well-known that the assumption on the eigenvalues of $A$ imply that all the entries of $e^{Au}$ decay exponentially fast as $u\to \infty$, so that $\eta$ is a finite measure on $[0,\infty)$ with moments of any order. Since the Fourier transform $\mathcal{F}[f]$ of $f$ is given by 
\begin{align*}
\mathcal{F}[f](y) = -b^T(A+iyI_n)^{-1}e_1,\quad y\in \mathbb{R},
\end{align*}
it admits a fraction decomposition; that is, there exist real polynomials $Q,R: \mathbb{C}\to \mathbb{C}$, $Q$ being monic with the eigenvalues of $A$ as its roots and being of larger degree than $R$, such that
\begin{align}\label{RQ-polynomials}
\mathcal{F}[f](y) = -\frac{R(-iy)}{Q(-iy)}
\end{align}
for $y\in \mathbb{R}$. (This is a direct consequence of the inversion formula $B^{-1} = \text{adj} (B)/\det (B)$.) By assuming that $Q$ and $R$ have no common roots, the pair $(Q,R)$ is unique. The following existence and uniqueness result is simply an application of Theorem~\ref{existenceTheorem} to the particular setup in question:
\begin{corollary}\label{exponentialExistence} Let $Q$ and $R$ be given as in (\ref{RQ-polynomials}). Suppose that $\kappa + b^TA^{-1}e_1 \neq 0$ and
\begin{align}\label{ConditionExpExist}
Q(z)\big[z+\kappa z^\beta \big] + R(z)z^\beta \neq 0
\end{align}
for all $z \in \mathbb{C}\setminus \{0\}$ with $\text{Re}(z) \geq 0$. Then there exists a unique purely non-deterministic solution $(X_t)_{t\in \mathbb{R}}$ to (\ref{fractionalSDDE}) with $\eta$ given by (\ref{exponentialEta}) and it is given by (\ref{solution}) with $g:\mathbb{R}\to \mathbb{R}$ characterized through the relation
\begin{align}\label{gExponential}
\mathcal{F}[g](y) = \frac{Q(-iy)}{Q(-iy)\big[-iy+\kappa (-iy)^\beta\big] + R(-iy)(-iy)^\beta},\quad y \in \mathbb{R}.
\end{align}
\end{corollary}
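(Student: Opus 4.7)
The plan is to reduce the claim to a direct application of Theorem~\ref{existenceTheorem}: I verify that the two stated hypotheses are jointly equivalent to the non-vanishing of $h$ on $\{z\in\mathbb{C}:\text{Re}(z)\leq 0\}$, after which existence, uniqueness, and the representation (\ref{solution}) are automatic, and only the algebraic simplification of $\mathcal{F}[g]$ into the form (\ref{gExponential}) remains.

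First I would compute $\mathcal{L}[\eta]$ on the closed left half-plane. Since the eigenvalues of $A$ lie in $\{\text{Re}(\cdot)<0\}$, for any $z$ with $\text{Re}(z)\leq 0$ the matrix $A+zI_n$ is invertible and $\int_0^\infty e^{(A+zI_n)u}\,du=-(A+zI_n)^{-1}$, hence $\mathcal{L}[f](z)=-b^T(A+zI_n)^{-1}e_1$. Comparing with (\ref{RQ-polynomials}) along the imaginary axis and using analytic continuation, this equals $-R(-z)/Q(-z)$, and so
\begin{align*}
h(z)=(-z)^{1-\beta}+\kappa+\frac{R(-z)}{Q(-z)}.
\end{align*}
Setting $w=-z$, so $\text{Re}(w)\geq 0$, note that the roots of $Q$ are the eigenvalues of $A$ and hence $Q(w)\neq 0$ on $\{\text{Re}(w)\geq 0\}$. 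Multiplying through by $Q(w)w^\beta$ and using $w^\beta(-z)^{1-\beta}=w^\beta w^{1-\beta}=w$, one obtains
\begin{align*}
Q(w)w^\beta h(z)=Q(w)\bigl[w+\kappa w^\beta\bigr]+R(w)w^\beta.
\end{align*}
For $z\neq 0$ (equivalently $w\neq 0$) both $Q(w)$ and $w^\beta$ are non-zero, so $h(z)\neq 0$ is equivalent to condition (\ref{ConditionExpExist}). For $z=0$ a direct computation (using $\int_0^\infty e^{Au}\,du=-A^{-1}$) gives $h(0)=-\eta([0,\infty))=\kappa+b^TA^{-1}e_1$, which is non-zero by the first hypothesis.

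Applying Theorem~\ref{existenceTheorem} now produces the unique purely non-deterministic solution as (\ref{solution}) with $g$ characterized by $\mathcal{F}[g](y)=(-iy)^{-\beta}/h(iy)$. To recast this as (\ref{gExponential}) I multiply numerator and denominator of the ratio by $Q(-iy)(-iy)^\beta$ and use $(-iy)^\beta(-iy)^{1-\beta}=-iy$; the stated formula drops out. The only genuine care points—the \emph{main obstacle}, such as it is—are bookkeeping: one must use a single branch of $z\mapsto z^\gamma$ on $\mathbb{C}\setminus(-\infty,0]$ throughout so the identities $w^\beta w^{1-\beta}=w$ and $(-iy)^\beta(-iy)^{1-\beta}=-iy$ remain valid, and one must justify promoting the Fourier identity (\ref{RQ-polynomials}) to a Laplace identity on $\{\text{Re}(z)\leq 0\}$, which is immediate from holomorphy of both sides on $\{\text{Re}(z)<0\}$ combined with continuity at the boundary.
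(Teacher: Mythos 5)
Your proposal is correct and follows essentially the same route as the paper: both reduce the corollary to Theorem~\ref{existenceTheorem} by computing $h(z)=(-z)^{1-\beta}+\kappa+R(-z)/Q(-z)$, using that $Q$ is non-vanishing on the closed right half-plane to clear denominators, splitting the non-vanishing of $h$ into the case $z\neq 0$ (which gives (\ref{ConditionExpExist})) and the case $z=0$ (which gives $h(0)=\kappa+b^TA^{-1}e_1\neq 0$), and then algebraically rewriting $(-iy)^{-\beta}/h(iy)$ into the form (\ref{gExponential}). Your extra remarks on branch consistency and on promoting the Fourier identity (\ref{RQ-polynomials}) to a Laplace identity by analytic continuation are correct care points that the paper leaves implicit.
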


Before giving examples we state Proposition~\ref{fracDiffChange}, which shows that the general SFDDE (\ref{fractionalSDDE}) can be written as
\begin{align}\label{exponentialSDDE}
d X_t =-\kappa D^\beta X_t\, dt +\int_0^\infty X_{t-u} D^\beta f (u)\, du\, dt + dL_t,\quad t \in \mathbb{R},
\end{align}
when $\eta$ is of the form (\ref{exponentialEta}). In case $\kappa =0$,  (\ref{exponentialSDDE}) is a (non-fractional) SDDE. However, the usual existence results obtained in this setting (for instance, those in \cite{contARMAframework} and \cite{GK}) are not applicable, since the delay measure $D^\beta f(u)\, du$ has unbounded support and zero total mass $\int_0^\infty D^\beta f(u)\, du = 0$.

\begin{proposition}\label{fracDiffChange} Let $f$ be of the form (\ref{exponentialEta}). Then $D^\beta f:\mathbb{R}\to \mathbb{R}$ defined by $D^\beta f(t) = 0$ for $t\leq 0$ and
\begin{align*}
D^\beta f (t) = \frac{1}{\Gamma (1-\beta)}b^T\biggr( Ae^{At}\int_0^t e^{-Au} u^{-\beta}\, du +  t^{-\beta}I_n \biggr)e_1
\end{align*}
for $t>0$ belongs to $L^1 \cap L^2$. If in addition, (\ref{ConditionExpExist}) holds, $\kappa + b^TA^{-1}e_1 \neq 0$, and $(X_t)_{t\in \mathbb{R}}$ is the solution given in Corollary~\ref{exponentialExistence}, then
\begin{align*}
\int_0^\infty D^\beta X_{t-u} f(u)\, du = \int_0^\infty X_{t-u}\, D^\beta f (u)\, du
\end{align*}
almost surely for any $t\in \mathbb{R}$.
\end{proposition}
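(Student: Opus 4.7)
The plan is to split the proposition into two parts: (a) justifying the explicit matrix formula for $D^\beta f$ and its $L^1\cap L^2$ membership, and (b) proving the interchange identity via a stochastic Fubini and a deterministic convolution identity.

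For part (a), I would start from the Marchaud definition of $D^\beta f(t)$ for $t>0$, splitting the integral at $u=t$. Since $f$ vanishes on $(-\infty,0]$, the tail gives $\int_t^\infty f(t) u^{-1-\beta}\,du = f(t)t^{-\beta}/\beta$. On $(\delta,t)$ I would integrate by parts in $u$, using that $u\mapsto u^{-\beta}$ is the antiderivative of $-\beta u^{-1-\beta}$; the boundary term at $u=\delta$ is $(f(t)-f(t-\delta))/(\beta\delta^\beta)$, which vanishes as $\delta\downarrow0$ because $f$ is smooth and $\beta<1$. Collecting terms gives
\begin{equation*}
D^\beta f(t) = \frac{1}{\Gamma(1-\beta)}\biggl[f(0)t^{-\beta} + \int_0^t f'(t-u)u^{-\beta}\,du\biggr],
\end{equation*}
and substituting $f(u)=b^T e^{Au}e_1$, $f(0)=b^T e_1$, $f'(s)=b^T A e^{As}e_1$, and changing variable $v=t-u$ in the integral produces the stated formula. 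For the $L^1\cap L^2$ claim, near $0$ the dominant term is $f(0)t^{-\beta}/\Gamma(1-\beta)$, which is locally in $L^1$ and $L^2$ because $\beta<1/2$. For the tail, I would use $f(\infty)=0$ (exponential decay of $e^{At}$) to write $f(0)=-\int_0^\infty f'(s)\,ds$ and recombine, obtaining
\begin{equation*}
D^\beta f(t) = \frac{1}{\Gamma(1-\beta)}\biggl[f(t)t^{-\beta}+\int_0^t f'(s)\bigl\{(t-s)^{-\beta}-t^{-\beta}\bigr\}ds\biggr].
\end{equation*}
The first term decays super-polynomially; in the second, $(t-s)^{-\beta}-t^{-\beta}=O(s/t^{\beta+1})$ on $[0,t/2]$ and $f'$ is exponentially small on $[t/2,t]$, yielding $D^\beta f(t)=O(t^{-\beta-1})$, hence $L^1\cap L^2$ at infinity.

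For part (b), I would apply stochastic Fubini on both sides, using the moving-average representation $X_r=\int_{\mathbb{R}} g(r-s)\,dL_s$ from Theorem~\ref{existenceTheorem} and, via Proposition~\ref{fractionalX}, $D^\beta X_r=\int_{\mathbb{R}}D^\beta g(r-s)\,dL_s$ (both $g$ and $D^\beta g$ vanishing on $(-\infty,0)$). The interchange is justified by $\|g\|_2,\|D^\beta g\|_2<\infty$ together with $f,D^\beta f\in L^1$. This gives
\begin{equation*}
\int_0^\infty D^\beta X_{t-u}f(u)\,du=\int_\mathbb{R}(D^\beta g\ast f)(t-s)\,dL_s,\qquad \int_0^\infty X_{t-u}D^\beta f(u)\,du=\int_\mathbb{R}(g\ast D^\beta f)(t-s)\,dL_s,
\end{equation*}
so by the $L^2$-isometry it suffices to verify $D^\beta g\ast f=g\ast D^\beta f$ in $L^2$. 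By Plancherel this reduces to the Fourier identity $\mathcal{F}[D^\beta g]\mathcal{F}[f]=\mathcal{F}[g]\mathcal{F}[D^\beta f]$. The left factor equals $(-iy)^\beta\mathcal{F}[g](y)\mathcal{F}[f](y)$ by Proposition~\ref{kernels}(\ref{dAlphaLimit}) (which gives $\mathcal{F}[D^\beta g]=1/h(iy)=(-iy)^\beta\mathcal{F}[g]$), so it suffices to show $\mathcal{F}[D^\beta f](y)=(-iy)^\beta\mathcal{F}[f](y)$.

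The last Fourier identity is the main technical obstacle, and I would handle it directly from the explicit formula derived in part (a): using $\mathcal{F}[u^{-\beta}\mathds{1}_{u>0}](y)=\Gamma(1-\beta)(-iy)^{\beta-1}$ and the one-sided integration-by-parts $\mathcal{F}[f'\mathds{1}_+](y)=-f(0)-iy\,\mathcal{F}[f](y)$ (valid because $f$ is absolutely continuous on $[0,\infty)$ with exponential decay), the convolution term contributes $(-f(0)-iy\mathcal{F}[f](y))(-iy)^{\beta-1}$, while the boundary term contributes $f(0)(-iy)^{\beta-1}$; the $f(0)(-iy)^{\beta-1}$ contributions cancel exactly, leaving $(-iy)^\beta\mathcal{F}[f](y)$. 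The only delicate point is that this Fourier computation requires $D^\beta f\in L^1\cap L^2$, which has just been established in part (a), closing the argument.
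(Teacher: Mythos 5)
Your proof is correct and follows the same overall architecture as the paper's: establish $D^\beta f\in L^1\cap L^2$, prove the Fourier identity $\mathcal{F}[D^\beta f](y)=(-iy)^\beta\mathcal{F}[f](y)$, deduce $(D^\beta g)\ast f=g\ast(D^\beta f)$ by comparing Fourier transforms, and finish with a stochastic Fubini argument. The differences are in the two technical sub-steps. For integrability, the paper proves $L^1$ by a direct estimate on the rewritten kernel (using $\int_0^\infty e^{Au}\,du=-A^{-1}$ and the mean value theorem bound $\vert(t-u)^{-\beta}-t^{-\beta}\vert\le\beta u(t-u)^{-\beta-1}$) and then gets $L^2$ as a byproduct of the Fourier identity, from $\mathcal{F}[D^\beta f](y)=-(-iy)^\beta b^T(A+iyI_n)^{-1}e_1\in L^2$; you instead obtain both memberships at once from the pointwise bounds $D^\beta f(t)=O(t^{-\beta})$ at $0$ and $O(t^{-\beta-1})$ at $\infty$, which is more elementary and self-contained. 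For the Fourier identity itself, the paper identifies $D^\beta f$ with the left-sided Riemann--Liouville derivative of $f$ and cites Samko et al., Theorem 7.1, whereas you compute it directly via one-sided integration by parts and the improper-integral formula $\mathcal{F}[u^{-\beta}\mathds{1}_{u>0}](y)=\Gamma(1-\beta)(-iy)^{\beta-1}$ --- the same device the paper invokes in the proof of Proposition~\ref{fractionalX} --- with the $f(0)$ boundary terms cancelling exactly. Your derivation of the explicit matrix formula from the Marchaud definition is not strictly required (the proposition takes that formula as the definition of $D^\beta f$), but it is what makes your direct Fourier computation go through. Both routes are sound; yours trades the external citation for a slightly longer but self-contained calculation.
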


\begin{remark}\label{RelationToFICARMA}
Due to the structure of the function $g$ in (\ref{gExponential}) one may, in line with the interpretation of CARMA processes, think of the corresponding solution $(X_t)_{t\in \mathbb{R}}$ as a stationary process that satisfies the formal equation
\begin{align}\label{CARMAinterpret}
\big( Q(D)\big[D+\kappa D^\beta\big] + R(D)D^\beta \big) X_t = Q(D)DL_t,\quad t \in \mathbb{R},
\end{align}
where $D$ denotes differentiation with respect to $t$ and $D^\beta$ is a suitable fractional derivative. Indeed, by heuristically applying the Fourier transform $\mathcal{F}$ to (\ref{CARMAinterpret}) and using computation rules such as $\mathcal{F}[DX](y) = (-iy)\mathcal{F}[X](y)$ and $\mathcal{F}[D^\beta X](y) = (-iy)^\beta \mathcal{F}[X](y)$, one ends up concluding that $(X_t)_{t\in \mathbb{R}}$ is of the form (\ref{solution}) with $g$ characterized by (\ref{gExponential}). For two monic polynomials $P$ and $Q$ with $q:=\text{deg}(Q)=\text{deg}(P) -1$ and all their roots contained in $\{z \in \mathbb{C}\, :\, \text{Re}(z) <0\}$, consider the FICARMA($q+1,\beta,q$) process $(X_t)_{t\in \mathbb{R}}$. Heuristically, by applying $\mathcal{F}$ as above, $(X_t)_{t\in \mathbb{R}}$ may be thought of as the solution to $P(D)D^\beta X_t = Q(D)DL_t$, $t\in \mathbb{R}$. By choosing the polynomial $R$ and the constant $\kappa$ such that $P(z) = Q(z)[z+\kappa] + R(z)$ we can think of $(X_t)_{t\in \mathbb{R}}$ as the solution to the formal equation
\begin{align}\label{CARMAinterpret2}
\big(Q(D)\big[D^{1+\beta}+\kappa D^\beta\big] + R(D)D^\beta \big) X_t = Q(D)DL_t,\quad t \in \mathbb{R}.
\end{align} 
It follows that (\ref{CARMAinterpret}) and (\ref{CARMAinterpret2}) are closely related, the only difference being that $D+\kappa D^\beta$ is replaced by $D^{1+\beta}+ \kappa D^\beta$. In particular, one may view solutions to SFDDEs corresponding to measures of the form (\ref{exponentialEta}) as being of the same type as FICARMA processes. While the considerations above apply only to the case where $\text{deg}(P)=q+1$, it should be possible to extend the SFDDE framework so that solutions are comparable to the FICARMA processes in the general case $\text{deg}(P)>q$ by following the lines of \cite{basse2018multivariate}, where similar theory is developed for the SDDE setting.
\end{remark}

We will now give two examples of (\ref{exponentialSDDE}).

\begin{example}\label{OU-example}
Consider choosing $\eta= -\kappa\delta_0$ for some $\kappa >0$ so that (\ref{fractionalSDDE}) becomes
\begin{align}\label{OUsimeqn}
X_t - X_s = -\frac{\kappa}{\Gamma (1-\beta)} \int_{-\infty}^t \big[(t-u)^{-\beta}-(s-u)_+^{-\beta} \big]\, X_u\, du + L_t - L_s
\end{align}
for $s<t$ or, in short,
\begin{align}\label{OUequationShort}
dX_t = -\kappa D^\beta X_t\, dt + dL_t,\quad t \in\mathbb{R}.
\end{align}

To argue that a unique purely non-deterministic solution exists, we observe that $Q(z)=1$ and $R(z) = 0$ for all $z \in \mathbb{C}$. Thus, in light of Corollary~\ref{exponentialExistence} and (\ref{ConditionExpExist}), it suffices to argue that $z + \kappa z^\beta \neq 0$ for all $z \in \mathbb{C}\setminus \{0\}$ with $\text{Re}(z)\geq 0$. By writing such $z$ as $z =r e^{i\theta}$ for a suitable $r>0$ and $\theta \in [-\pi/2,\pi/2]$, the condition may be written as
\begin{align}\label{OU-Frac}
\big(r\cos (\theta) + \kappa r^\beta\cos (\beta \theta) \big) +i \big(r\sin (\theta) + \kappa r^\beta \sin (\beta\theta) \big) \neq 0.
\end{align}
If the imaginary part of the left-hand side of (\ref{OU-Frac}) is zero it must be the case that $\theta =0$, since $\kappa >0$ while $\sin (\theta)$ and $\sin (\beta\theta)$ are of the same sign. However, if $\theta = 0$, the real part of the left-hand side of (\ref{OU-Frac}) is $r+\kappa r^\beta >0$. Consequently, Corollary~\ref{exponentialExistence} implies that a solution to (\ref{OUequationShort}) is characterized by (\ref{solution}) and $\mathcal{F}[g](y) = ((-iy)^\beta \kappa - iy)^{-1}$ for $y\in \mathbb{R}$. In particular, $\gamma_X$ takes the form
\begin{align}\label{ACF-OU}
\gamma_X (t) &= \int_\mathbb{R} \frac{e^{ity}}{y^2 + 2\kappa \sin (\tfrac{\beta \pi}{2})\vert y \vert^{1+ \beta}+ \kappa^2 \vert y \vert^{2\beta}}\, dy.
\end{align}
In Figure~\ref{plots-OU} we have plotted the ACF of $(X_t)_{t\in \mathbb{R}}$ using (\ref{ACF-OU}) with $\kappa = 1$ and $\beta \in \{0.1,0.2,0.3,0.4\}$. We compare it to the ACF of the corresponding fractional Ornstein-Uhlenbeck process (equivalently, the FICARMA($1,\beta,0$) process) which was presented in (\ref{fOUcompact}). To do so, we use that its autocovariance function $\gamma_\beta$ is given by
\begin{align}\label{ACF-FICAR}
\gamma_\beta(t) =  \int_\mathbb{R} \frac{e^{ity}}{\vert y \vert^{2(1+\beta)} + \kappa^2 \vert y \vert^{2\beta}}\, dy.
\end{align}
From these plots it becomes evident that, although the ACFs share the same behavior at infinity, they behave differently near zero. In particular, we see that the ACF of $(X_t)_{t \in \mathbb{R}}$ decays more rapidly around zero, which is in line with Proposition~\ref{roughness} and the fact that the $L^2(\mathbb{P})$-H\"{o}lder continuity of the fractional Ornstein-Uhlenbeck process increases as $\beta$ increases (cf. the introduction).

\begin{figure}[h]
\centering
\includegraphics[scale=0.25]{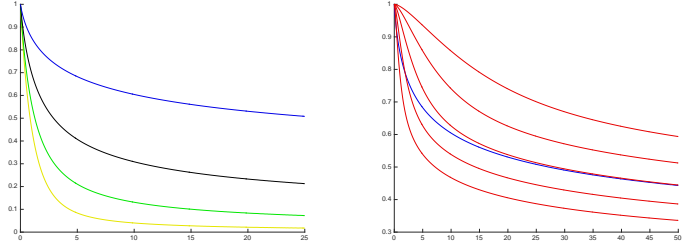}
\caption{The left plot is the ACF based on (\ref{ACF-OU}) with $\beta = 0.1$ (yellow), $\beta = 0.2$ (green), $\beta = 0.3$ (black) and $\beta = 0.4$ (blue). With $\beta = 0.4$ fixed, the plot on the right compares the ACF based on (\ref{ACF-OU}) with $\kappa = 1$ (blue) to the ACF based on (\ref{ACF-FICAR}) for $\kappa =0.125,0.25,0.5,1,2$ (red) where the ACF decreases in $\kappa$, in particular, the top curve corresponds to $\kappa = 0.125$ and the bottom to $\kappa = 2$.}\label{plots-OU}
\end{figure}
\end{example}

\begin{example}\label{simpleExp} Suppose that $\eta$ is is given by (\ref{exponentialEta}) with $\kappa = 0$, $A = -\kappa_1$, and $b=-\kappa_2$ for some $\kappa_1,\kappa_2 >0$. In this case, $f(t) = -\kappa_2 e^{-\kappa_1 t}$ and (\ref{exponentialSDDE}) becomes
\begin{align}\label{simpleExpExample}
dX_t = \frac{\kappa_2}{\Gamma (1-\beta)}\int_0^\infty X_{t-u}\biggr(\kappa_1 e^{-\kappa_1 u}\int_0^u e^{\kappa_1v} v^{-\beta}\, dv - u^{-\beta} \biggr)\, du\, dt + dL_t,
\end{align}
and since $Q(z) = z+ \kappa_1$ and $R(z) = \kappa_2$ we have that
\begin{align*}
z Q(z) + R(z)z^\beta = z^2 + \kappa_1 z + \kappa_2 z^\beta.
\end{align*}
To verify (\ref{ConditionExpExist}), set $z = x+iy$ for $x>0$ and $y\in \mathbb{R}$ and note that
\begin{align}\label{exampleIdentity}
\begin{aligned}
z^2 + \kappa_1 z + \kappa_2 z^\beta =& \big(x^2-y^2 + \kappa_1 x + \kappa_2 \cos (\beta \theta_z) \vert z\vert^\beta \big) \\
&+ i\big(\kappa_1y +2xy + \kappa_2\sin (\beta\theta_z)\vert z \vert^\beta \big)
\end{aligned}
\end{align}
for a suitable $\theta_z \in (-\pi/2,\pi/2)$. For the imaginary part of (\ref{exampleIdentity}) to be zero it must be the case that
\begin{align*}
(\kappa_1+2x)y = - \kappa_2 \sin (\beta \theta_z)\vert z \vert^\beta,
\end{align*}
and this can only happen if $y=0$, since $x,\kappa_1,\kappa_2>0$ and the sign of $y$ is the same as that of $\sin (\beta \theta_z)$. However, if $y=0$ it is easy to see that the real part of (\ref{exampleIdentity}) cannot be zero for any $x>0$, so we conclude that (\ref{ConditionExpExist}) holds and that there exists a stationary solution $(X_t)_{t\in \mathbb{R}}$ given through the kernel (\ref{gExponential}). The autocovariance function $\gamma_X$ is given by
\begin{align}\label{simpleExAutoCov}
\gamma_X(t) &= \int_\mathbb{R}e^{ity}\frac{y^2+ \kappa_1^2}{y^4 + 2\kappa_2\big(\kappa_1\gamma_2\vert y \vert^{1+\beta}-\gamma_1\vert y \vert^{2+\beta} \big) + \kappa_1^2y^2+\kappa_2^2 \vert y \vert^{2\beta}}\, dy
\end{align}
where $\gamma_1 = \cos (\beta \pi /2)$ and $\gamma_2 = \sin (\beta \pi /2)$. The polynomials to the associated FICARMA($2, \beta,1$) process are given by $P(z) = z^2 + \kappa_1 z + \kappa_2$ and $Q(z) = z + \kappa_1$ (see Remark~\ref{RelationToFICARMA}) and the autocovariance function $\gamma_\beta$ takes the form
\begin{align}\label{ACFficarma}
\gamma_\beta(t) = \int_\mathbb{R}e^{ity}\frac{y^2+ \kappa_1^2}{\vert y\vert^{4+2\beta} +(\kappa_1^2-2\kappa_2)\vert y\vert^{2+2\beta} + \kappa_2^2\vert y \vert^{2\beta}}\, dy.
\end{align}
In Figure~\ref{simpleExACF} we have plotted the ACF based on (\ref{simpleExAutoCov}) for $\kappa_1 = 1$ and various values of $\kappa_2$ and $\beta$. For comparison we have also plotted the ACF based on (\ref{ACFficarma}) for the same choices of $\kappa_1$, $\kappa_2$ and $\beta$.

\begin{figure}[h]
\centering
\includegraphics[scale=0.27]{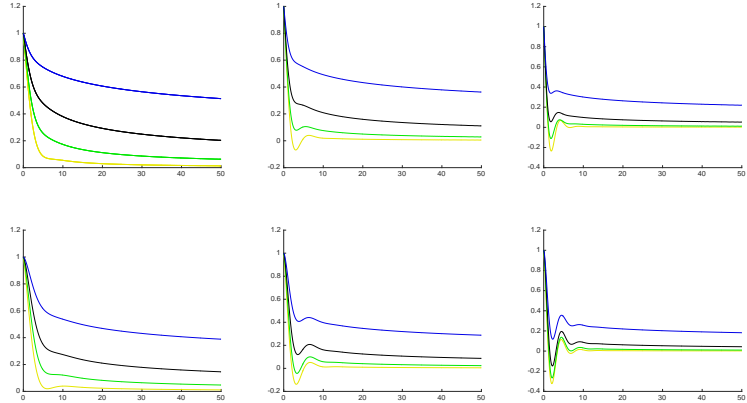}
\caption{First row is ACF based on (\ref{simpleExAutoCov}), second row is ACF based on (\ref{ACFficarma}), and the columns correspond to $\kappa_2 = 0.5$, $\kappa_2 = 1$ and $\kappa_2 = 2$, respectively. Within each plot, the lines correspond to $\beta =0.1$ (yellow), $\beta = 0.2$ (green), $\beta = 0.3$ (black) and $\beta = 0.4$ (blue). In all plots, $\kappa_1 = 1$.}\label{simpleExACF}
\end{figure}
\end{example}

\section{Simulation from the SFDDE}\label{estimation}
%, since \eqref{fractionalSDDEcompact} is not well-suited for numerical use.The reason is that the operator $D^\beta$ is unstable when approximation errors are introduced. On the other hand, the equation in \eqref{fractionalSDDE} is considerably more stable, since we move $D^\beta$ to the indicator function $\mathds{1}_{(s,t]}$ and $D^\beta_-\mathds{1}_{(s,t]}$ can be determined explicitly.

In the following we will focus on simulating from (\ref{fractionalSDDE}). We begin this simulation study by considering the Ornstein-Uhlenbeck type equation discussed in Example \ref{OU-example} with $\kappa=1$ and under the assumption that $(L_t)_{t\in \mathbb{R}}$ is a standard Brownian motion. Let $c_1 =100/\Delta$ and $c_2 = 2000/\Delta$. We generate a simulation of the solution process $(X_t)_{t \in \RR}$ on a grid of size $\Delta = 0.01$ and with $3700/\Delta$ steps of size $\Delta$ starting from $-c_1-c_2$ and ending at $1600/\Delta$. Initially, we set $X_t$ equal to zero for the first $c_1$ points in the grid and then discretize (\ref{OUsimeqn}) using the approximation
\begin{align*}
 \MoveEqLeft \int_\mathbb{R} \big[(n\Delta-u)_+^{-\beta}-((n-1)\Delta-u)_+^{-\beta} \big]\, X_u\, du \\
 \simeq  &\frac{1}{1-\beta}\Delta^{1-\beta}X_{(n-1)\Delta}   \\
 &+ \sum_{k=n-c_1}^{n-1} \frac{X_{k\Delta} +X_{(k-1)\Delta}}{2} \int_{(k-1)\Delta}^{k\Delta}\big[ (n\Delta-u)_+^{-\beta}-((n-1)\Delta-u)_+^{-\beta}\big] \, du \\
 =&  \frac{1}{1-\beta}\Delta^{1-\beta}X_{(n-1)\Delta}   + \frac{1}{1-\beta} \sum_{k=n-c_1}^{n-1} \frac{X_{k\Delta} +X_{(k-1)\Delta}}{2}\\
 & \cdot \big( 2((n-k-1)\Delta)^{1-\beta} -((n-k)\Delta)^{1-\beta} -((n-k-2)\Delta)^{1-\beta}\big)
\end{align*} 
for $n=-c_2+1,\dots, 3700/\Delta-c_2-c_1$. Next, we disregard the first $c_1 +c_2$ values of the simulated sample path to obtain an approximate sample from the stationary distribution. We assume that the process is observed on a unit grid resulting in simulated values $X_1,\dots,X_{1600}$. This is repeated $200$ times, and in every repetition the sample ACF based on $X_1,\dots,X_L$ is computed for $t=1,\dots, 25$ and $L =100,400,1600$. In long-memory models, the sample mean $\bar{X}_L$ can be a poor approximation to the true mean $\mathbb{E}[X_0]$ even for large $L$, and this may result in considerable negative (finite sample) bias in the sample ACF (see, e.g., \cite{newbold1993bias}). Due to this bias, it may be difficult to see if we succeed in simulating from (\ref{fractionalSDDE}), and hence we will assume that $\mathbb{E}[X_0]$ is known to be zero when computing the sample ACF. We calculate the $95\%$ confidence interval
\begin{align*}
\left[ \bar{\rho}(k) - 1.96 \tfrac{\hat{\sigma}(k)}{\sqrt{200}},  \bar{\rho}(k) + 1.96 \tfrac{\hat{\sigma}(k)}{\sqrt{200}}\right], 
\end{align*} 
for the mean of the sample ACF based on $L$ observations at lag $k$. Here $\bar{\rho}(k)$ is the sample mean and $\hat{\sigma}(k)$ is the sample standard deviations of the ACF at lag $k$ based on the $200$ replications. In Figure~\ref{autocovsim1}, the theoretical ACFs and the corresponding $95\%$ confidence intervals for the mean of the sample ACFs are plotted for $\beta =0.1,0.2$ and $L=100,400,1600$. We see that, when correcting for the bias induced by an unknown mean $\mathbb{E}[X_0]$, simulation from equation (\ref{OUsimeqn}) results in a fairly unbiased estimator of the ACF for small values of $\beta$. When $\beta>0.25$, in the case where the ACF of $(X_t)_{t \in \RR}$ is not even in $L^2$, the results are more unstable as it requires large values of $c_1$ and $c_2$ to ensure that the simulation results in a good approximation to the stationary distribution of $(X_t)_{t\in \mathbb{R}}$. Moreover, even after correcting for the bias induced by an unknown mean of the observed process, the sample ACF for the ARFIMA process shows considerable finite sample bias when $\beta>0.25$, see \cite{newbold1993bias}, and hence we may expect this to apply to solutions to (\ref{fractionalSDDE}) as well.

\begin{figure}[h]
\centering
\includegraphics[scale=0.4]{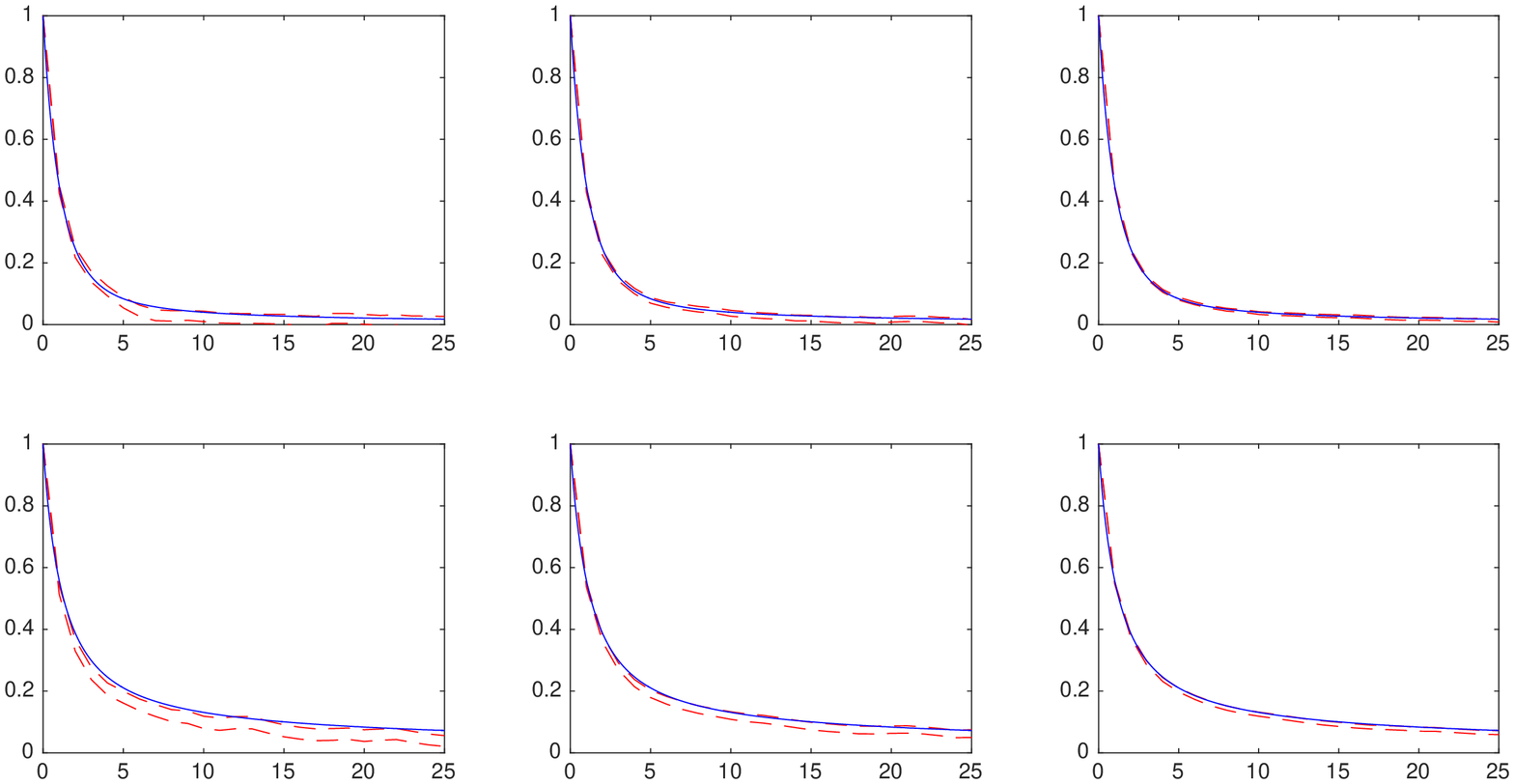}
\caption{Theoretical ACF and $95\%$ confidence intervals of the mean of the sample ACF based on $200$ replications of $X_1,\dots, X_L$. Columns correspond to $L=100$, $L=400$ and $L= 1600$, respectively, and rows correspond to $\beta=0.1$ and $\beta = 0.2$, respectively. The model is (\ref{OUsimeqn}).}\label{autocovsim1}
\end{figure}

In Figure~\ref{autocovsim2} we have plotted box plots for the $200$ replications of the sample ACF for $\beta=0.1,0.2$ and $L=100,400,1600$. We see that the sample ACFs have the expected convergence when $L$ grows and that the distribution is more concentrated in the case where less memory is present. 

\begin{figure}[h]
\centering
\includegraphics[scale=0.475]{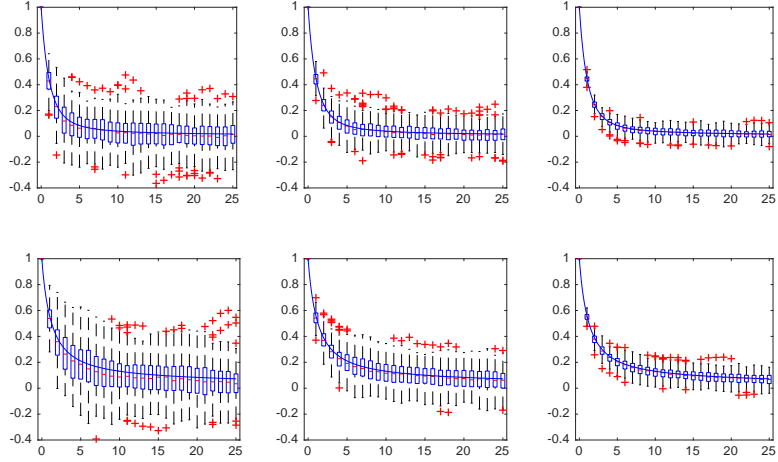}
\caption{Box plots for the sample ACF based on $200$ replications of $X_1,\dots,X_L$ together with the theoretical ACF. Columns correspond to $L=100$, $L=400$ and $L= 1600$, respectively, and rows correspond to $\beta=0.1$ and $\beta = 0.2$, respectively. The model is (\ref{OUsimeqn}).}\label{autocovsim2}
\end{figure}

Following the same approach as above, we simulate the solution to the equation discussed in Example~\ref{simpleExp}. Specifically, the simulation is based on equation (\ref{fractionalSDDE}), restricted to the case where $\eta(dv) = -e^{-v}\, dv$ and $(L_t)_{t \in \RR}$ is a standard Brownian motion. In this case, we use the approximation
\begin{align*}
 \MoveEqLeft \int_\mathbb{R} \big[(n\Delta-u)_+^{-\beta}-((n-1)\Delta-u)_+^{-\beta} \big]\, \int_0^\infty X_{u-v} \, e^{-v} \,dv \, du \\
  =& \int_0^\infty X_{n\Delta -v} \int_0^v \big[(u- \Delta)_+^{-\beta}-u_+^{-\beta} \big]e^{u-v} \, du \, dv \\
  \simeq & \frac{1}{2} \Delta X_{(n-1)\Delta} f(\Delta) \\
 &+  \sum_{k=2}^{c_1} \frac{1}{4}\Delta (X_{(n-k)\Delta} + X_{(n-k+1)\Delta})(\varphi(k\Delta) + \varphi((k-1)\Delta))
\end{align*}
where $\varphi : \RR \to \RR$ is given by 
\begin{align*}
\varphi(v) = \int_0^v \big[(u- \Delta)_+^{-\beta}-u^{-\beta} \big]e^{u-v} \, dv.
\end{align*}
We approximate $\varphi$ recursively by noting that
\begin{align*}
\varphi(k \Delta  ) &=  \int_0^{k \Delta } \big[(u- \Delta)_+^{-\beta}-u^{-\beta} \big]e^{u-k \Delta } \, dv \\
&\simeq \frac{1+e^{-\Delta}}{2} \int_{(k-1)\Delta}^{k\Delta}  \big[(u- \Delta)_+^{-\beta}-u_+^{-\beta} \big] \, dv + e^{-\Delta} \varphi((k-1)\Delta)\\
& =  \frac{1}{1-\beta}\frac{1+e^{-\Delta}}{2}\big[((k-1)\Delta)^{1-\beta} - (k\Delta)^{1-\beta}) \big] + e^{-\Delta} \varphi((k-1)\Delta)
\end{align*}
for $k\geq 1$. The theoretical ACFs and corresponding $95\%$ confidence intervals are plotted in Figure~\ref{autocovsim3} and the box plots in Figure~\ref{autocovsim4}. The findings are consistent with first example that we considered.
\begin{figure}[h]
\centering
\includegraphics[scale=0.4]{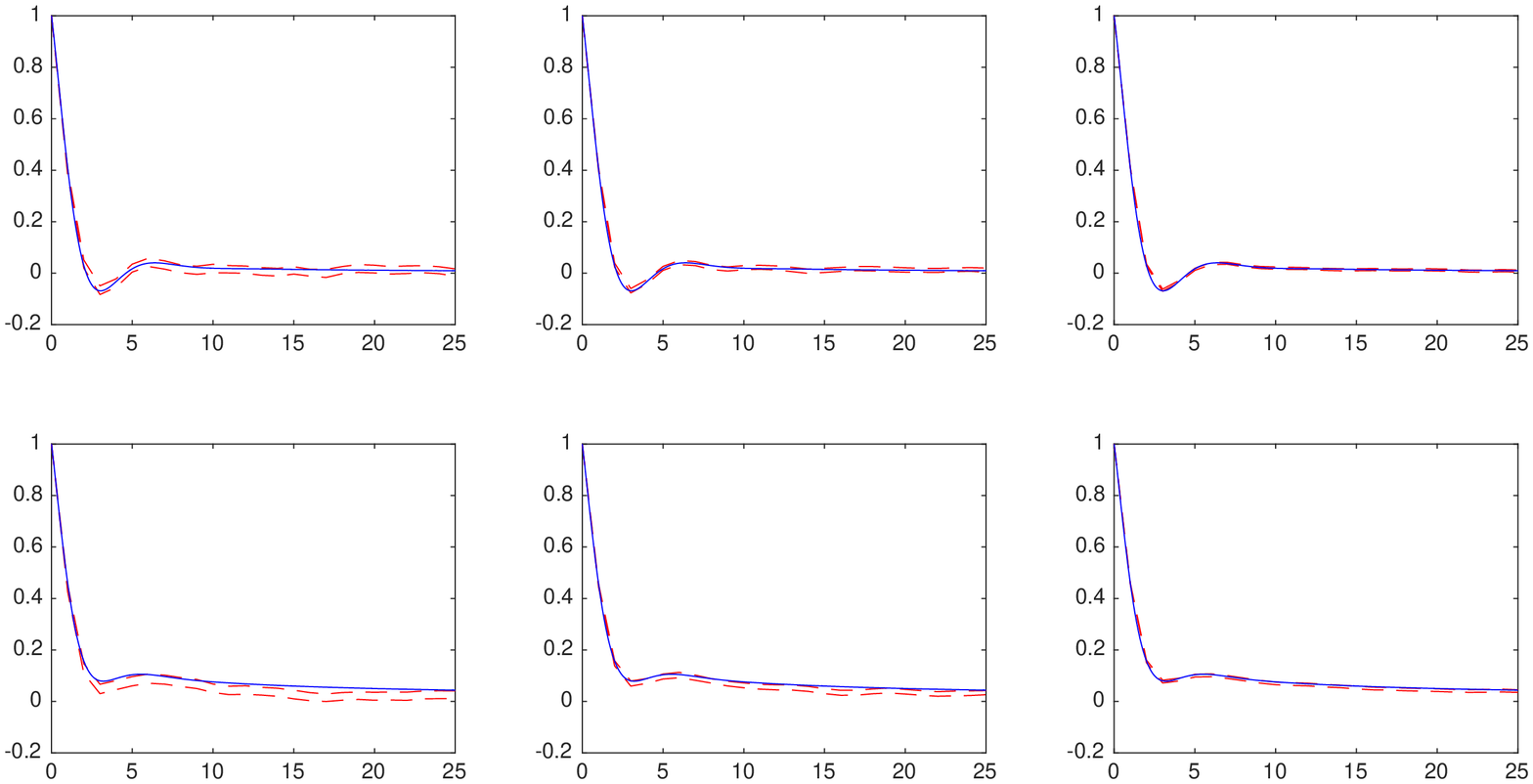}
\caption{Theoretical ACF and $95\%$ confidence intervals of the mean of the sample ACF sample based on $200$ replications of $X_1,\dots, X_L$. Columns correspond to $L=100$, $L=400$ and $L= 1600$, respectively, and rows correspond to $\beta=0.1$ and $\beta = 0.2$, respectively. The model is (\ref{simpleExpExample}).}\label{autocovsim3}
\end{figure}
\begin{figure}[h]
\centering
\includegraphics[scale=0.475]{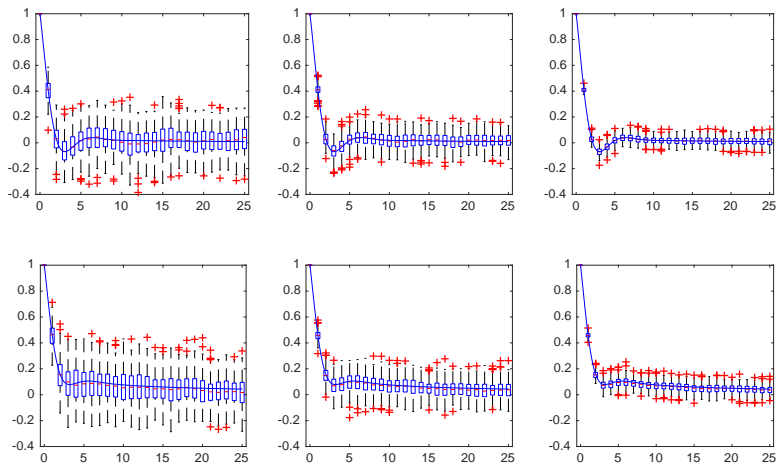}
\caption{Box plots for the sample ACF based on $200$ replications of $X_1,\dots, X_L$ together with the theoretical ACF. Columns correspond to $L=100$, $L=400$ and $L= 1600$, respectively, and rows correspond to $\beta=0.1$ and $\beta = 0.2$, respectively. The model is (\ref{simpleExpExample}).}\label{autocovsim4}
\end{figure}

\FloatBarrier

\appendix

\section{Spectral representations of continuous-time stationary processes}
This appendix provides an exposition of the spectral representation for continuous-time stationary, centered and square integrable processes with a continuous autocovariance function. The proofs are found in Appendix~\ref{proofs} For an extensive treatment we refer to \cite[Section~9.4]{grimmett2001probability} and \cite[Appendix~A2.1]{koopmanSpectral}.

Recall that if $S = \{S(t)\, :\, t \in \mathbb{R}\}$ is a (complex-valued) process such that
\begin{enumerate}[(i)]
\item $\mathbb{E}[\vert S(t) \vert^2]< \infty$ for all $t\in \mathbb{R}$,
\item $\mathbb{E}[\vert S(t+s)-S(t)\vert^2]\to 0$ as $s\downarrow 0$ for all $t\in \mathbb{R}$, and
\item $\mathbb{E}[(S(v)-S(u))\overline{(S(t)-S(s))}] = 0$ for all $u \leq v \leq s \leq t$,
\end{enumerate}
we may (and do) define integration of $f$ with respect to $S$ in the sense of \cite[pp 388-390]{grimmett2001probability} for any $f \in L^2(G)$, where $G$ is the control measure characterized by
\begin{align*}
G((s,t]) = \mathbb{E}\big[\vert S(t) - S(s)\vert^2\big]
\end{align*}
for $s\leq t$. We have the following stochastic Fubini result for this type of integral:
\begin{proposition}\label{stochFubini} Let $S = \{S(t)\, :\, t \in \mathbb{R}\}$ be a process given as above. Let $\mu$ be a finite Borel measure on $\mathbb{R}$, and let $f:\mathbb{R}^2\to \mathbb{C}$ be a measurable function in $L^2(\mu \times G)$. Then all the integrals below are well-defined and
\begin{align}\label{FubiniRelation}
\int_\mathbb{R} \biggr(\int_\mathbb{R} f(x,y)\, \mu (dx) \biggr) \, S(dy) = \int_\mathbb{R} \biggr(\int_\mathbb{R} f(x,y)\, S(dy) \biggr)\, \mu (dx) 
\end{align}
almost surely.
\end{proposition}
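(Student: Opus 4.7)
The plan is to follow the standard template for stochastic Fubini theorems: verify the identity on a dense class of elementary functions for which both integrals can be computed by hand, and then extend to all of $L^2(\mu \times G)$ by an $L^2$-isometry / approximation argument.

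Step 1 (well-definedness). Since $\mu \times G$ is $\sigma$-finite and $f \in L^2(\mu \times G)$, ordinary Tonelli gives that $f(x,\cdot) \in L^2(G)$ for $\mu$-a.e.\ $x$, so the inner stochastic integral $Y(x) := \int_\mathbb{R} f(x,y)\,S(dy)$ is defined as an element of $L^2(\mathbb{P})$ for $\mu$-a.e.\ $x$, with isometry $\EE|Y(x)|^2 = \int |f(x,y)|^2\,G(dy)$. Analogously, Cauchy--Schwarz together with the finiteness of $\mu$ yields
\begin{align*}
\int_\mathbb{R} \Bigl|\int_\mathbb{R} f(x,y)\,\mu(dx)\Bigr|^2 G(dy) \leq \mu(\mathbb{R}) \int_\mathbb{R}\int_\mathbb{R} |f(x,y)|^2\,\mu(dx)\,G(dy) < \infty,
\end{align*}
so $y \mapsto \int f(x,y)\,\mu(dx)$ is in $L^2(G)$ and its stochastic integral against $S$ is well-defined. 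For the outer $\mu$-integral on the right-hand side, note that Fubini applied to the isometry identity gives $\int_\mathbb{R} \EE|Y(x)|^2\,\mu(dx) = \|f\|_{L^2(\mu\times G)}^2$, hence $Y \in L^2(\mu \times \PP)$ (after choosing a jointly measurable version, see the remark on the obstacle below), and the outer integral exists in $L^2(\PP)$ by Cauchy--Schwarz against the finite measure $\mu$.

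Step 2 (elementary functions). For $f = \mathbbm{1}_{A\times B}$ with Borel sets $A,B \subset \mathbb{R}$, the left-hand side of (\ref{FubiniRelation}) equals $\mu(B)\int \mathbbm{1}_A(y)\,S(dy)$, and the right-hand side equals $\int \mathbbm{1}_B(x)\,\mu(dx) \cdot \int \mathbbm{1}_A(y)\,S(dy) = \mu(B)\int \mathbbm{1}_A(y)\,S(dy)$. Linearity extends the identity to all finite linear combinations of such indicators, which form a dense subspace of $L^2(\mu \times G)$.

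Step 3 ($L^2$-extension). Pick simple $f_n \to f$ in $L^2(\mu\times G)$. By the bound in Step 1, the inner deterministic integral $y \mapsto \int f_n(x,y)\,\mu(dx)$ converges to $y \mapsto \int f(x,y)\,\mu(dx)$ in $L^2(G)$, and the isometry of the $S$-integral then gives convergence of the left-hand side in $L^2(\PP)$. For the right-hand side, applying the isometry pointwise in $x$ and integrating yields
\begin{align*}
\int_\mathbb{R} \EE\Bigl|\int_\mathbb{R}(f_n-f)(x,y)\,S(dy)\Bigr|^2 \mu(dx) = \|f_n - f\|_{L^2(\mu\times G)}^2 \to 0,
\end{align*}
so $Y_n \to Y$ in $L^2(\mu\times \PP)$. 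A final Cauchy--Schwarz estimate, $\EE|\int (Y_n-Y)(x)\,\mu(dx)|^2 \leq \mu(\mathbb{R}) \int \EE|Y_n-Y|^2\,\mu(dx)$, delivers $L^2(\PP)$-convergence of the right-hand side. Since the identity holds for each $f_n$, passing to the limit gives (\ref{FubiniRelation}).

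The main obstacle is ensuring joint measurability of $(x,\omega) \mapsto Y(x)(\omega) = \int f(x,y)\,S(dy)(\omega)$, since the stochastic integral is defined only up to $\PP$-null sets for each fixed $x$. I would handle this by first choosing, for each simple $f_n$, the canonical jointly measurable version (which is explicit from the finite-sum representation), and then using the $L^2(\mu\times \PP)$-Cauchy property established above to define $Y$ as a $\mu \times \PP$-a.s.\ limit, which is automatically jointly measurable. With this representative fixed, the outer $\mu$-integrals on both sides of (\ref{FubiniRelation}) are unambiguous, and the identity propagates to the limit.
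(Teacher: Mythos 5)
Your proof is correct and follows essentially the same route as the paper's: verify the identity on finite linear combinations of product indicators, establish well-definedness of all integrals via the isometry and a Cauchy--Schwarz bound against the finite measure $\mu$, and pass to the limit in $L^2(\mathbb{P})$ on both sides. Your additional remarks on choosing a jointly measurable version of $x \mapsto \int f(x,y)\,S(dy)$ address a point the paper leaves implicit, but do not change the substance of the argument.
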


Suppose that $(X_t)_{t\in \mathbb{R}}$ is a stationary process with $\mathbb{E}[X_0^2]< \infty$ and $\mathbb{E}[X_0] = 0$, and denote by $\gamma_X $ its autocovariance function. Assuming that $\gamma_X$ is continuous, it follows by Bochner's theorem that there exists a finite Borel measure $F_X$ on $\mathbb{R}$ having $\gamma_X$ as its Fourier transform, that is,
\begin{align*}
\gamma_X (t) = \int_\mathbb{R}e^{ity}F_X(dy) ,\quad t \in \mathbb{R}.
\end{align*}
The measure $F_X$ is referred to as the spectral distribution of $(X_t)_{t\in \mathbb{R}}$.

\begin{theorem}\label{spectralTheorem} Let $(X_t)_{t\in \mathbb{R}}$ be given as above and let $F_X$ be the associated spectral distribution. Then there exists a (complex-valued) process $\Lambda_X = \{\Lambda_X(y)\, :\, y\in \mathbb{R}\}$ satisfying (i)-(iii) above with control measure $F_X$, such that
\begin{align}\label{specRepresentation}
X_t = \int_\mathbb{R}e^{ity}\, \Lambda_X (dy)
\end{align}
almost surely for each $t\in \mathbb{R}$. The process $\Lambda_X$ is called the spectral process of $(X_t)_{t\in \mathbb{R}}$ and (\ref{specRepresentation}) is referred to as its spectral representation.
\end{theorem}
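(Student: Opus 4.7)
The plan is to construct $\Lambda_X$ via the standard Hilbert space isometry between the time domain of $(X_t)_{t\in\mathbb{R}}$ and the frequency domain $L^2(F_X)$. Let $H_X$ denote the $L^2(\mathbb{P})$-closure of the linear span of $\{X_t : t \in \mathbb{R}\}$, and let $K$ denote the $L^2(F_X)$-closure of the linear span of the exponentials $\{e_t : t \in \mathbb{R}\}$, where $e_t(y) = e^{ity}$. The defining relation
\begin{align*}
\mathbb{E}[X_s \overline{X_t}] = \gamma_X(s-t) = \int_\mathbb{R} e^{i(s-t)y}\, F_X(dy) = \langle e_s, e_t\rangle_{L^2(F_X)}
\end{align*}
shows that the map $\Phi_0 : \sum c_k X_{t_k} \mapsto \sum c_k e_{t_k}$ preserves inner products on finite linear combinations. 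Hence $\Phi_0$ extends uniquely to an isometric isomorphism $\Phi : H_X \to K$.

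Next I would show that $K = L^2(F_X)$. Since $F_X$ is a finite Borel measure on $\mathbb{R}$, the trigonometric polynomials (finite linear combinations of the $e_t$) are dense in $C_0(\mathbb{R}) \cap L^2(F_X)$ by a Stone--Weierstrass/locally compact Fourier-approximation argument, and $C_c(\mathbb{R})$ is dense in $L^2(F_X)$; alternatively, any $f \in L^2(F_X)$ orthogonal to every $e_t$ satisfies $\int e^{-ity}f(y)\, F_X(dy) = 0$ for all $t$, and by uniqueness of the Fourier transform of the finite complex measure $f\, F_X$, we get $f = 0$ in $L^2(F_X)$. Either way, $K = L^2(F_X)$, and hence $\Phi^{-1}$ is defined on all of $L^2(F_X)$.

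With $\Phi$ in hand, define the set function $\Lambda_X(A) := \Phi^{-1}(\mathds{1}_A)$ for Borel sets $A\subset \mathbb{R}$ (which lie in $L^2(F_X)$ since $F_X$ is finite), and then set $\Lambda_X(y) := \Lambda_X((-\infty,y])$. Properties (i)--(iii) are now immediate from the isometry: for any Borel sets $A,B$,
\begin{align*}
\mathbb{E}[\Lambda_X(A)\overline{\Lambda_X(B)}] = \langle \mathds{1}_A,\mathds{1}_B\rangle_{L^2(F_X)} = F_X(A\cap B),
\end{align*}
which yields orthogonal increments, the control measure identity $\mathbb{E}[|\Lambda_X(t)-\Lambda_X(s)|^2] = F_X((s,t])$, and (via dominated convergence in $F_X$) mean-square right-continuity at every $t$.

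Finally, for the spectral representation, recall that the integral $\int f\, dS$ from the preliminaries is constructed as the $L^2$-limit of $\sum c_k(S(b_k)-S(a_k))$ for simple $f = \sum c_k \mathds{1}_{(a_k,b_k]}$ and extended by an $L^2(G)$-isometry. Applying this construction to $S = \Lambda_X$ (with $G = F_X$), one checks on simple functions that $\int f\, d\Lambda_X = \Phi^{-1}(f)$ and then extends by continuity to all $f\in L^2(F_X)$. Taking $f = e_t$ gives
\begin{align*}
\int_\mathbb{R} e^{ity}\, \Lambda_X(dy) = \Phi^{-1}(e_t) = X_t
\end{align*}
almost surely for each $t\in\mathbb{R}$, which is (\ref{specRepresentation}). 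The main technical point I expect to be delicate is the density argument $K=L^2(F_X)$ (so that $\Phi^{-1}$ is genuinely defined on indicator functions); the rest is a routine transport of structure via the Hilbert space isometry.
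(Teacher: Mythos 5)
Your proposal is correct and follows essentially the same route as the paper: the isometric isomorphism between the closed linear span of $\{X_t\}$ and the closed span of the exponentials in $L^2(F_X)$, definition of $\Lambda_X$ as the image of indicator functions, and verification of the representation on simple functions followed by an $L^2$-continuity extension. The only difference is cosmetic: you prove the density of the exponentials in $L^2(F_X)$ directly (via uniqueness of Fourier transforms of finite measures), whereas the paper cites Yaglom for the fact that $\mathds{1}_{(-\infty,y]}$ lies in the closed span.
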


\begin{remark}
Let the situation be as in Theorem~\ref{spectralTheorem} and note that if there exists another process $\tilde{\Lambda}_X = \{\tilde{\Lambda}_X(y)\, :\, y\in \mathbb{R}\}$ such that
\begin{align*}
X_t = \int_\mathbb{R}e^{ity}\, \tilde{\Lambda}_X (dy)
\end{align*}
for all $t \in \mathbb{R}$, then its control measure is necessarily given by $F_X$ and
\begin{align*}
\int_\mathbb{R} f(y)\, \Lambda_X (dy) = \int_\mathbb{R} f(y)\, \tilde{\Lambda}_X (dy)
\end{align*}
almost surely for all $f \in L^2(F_X)$.
\end{remark}

\section{Proofs}\label{proofs}
\begin{proof}[Proof of Proposition~\ref{kernels}]
For $\gamma >0$ define $h_\gamma (z) = (-z)^{\gamma}/h(z)$ for each $z\in \mathbb{C}\setminus \{0\}$ with $\text{Re}(z)\leq 0$. By continuity of $h$ and the asymptotics $\vert h_\gamma (z) \vert \sim \vert \eta ([0,\infty))\vert^{-1} \vert z\vert^{\gamma}$, $\vert z \vert \to 0$, and $\vert h_\gamma (z)\vert \sim \vert z \vert^{\gamma - 1}$, $\vert z \vert \to \infty$, it follows that
\begin{align}\label{HardyCond}
\sup_{x<0}\int_\mathbb{R} \vert h_\gamma (x+iy)\vert^2\, dy <\infty
\end{align}
for $\gamma \in (-1/2,1/2)$. In other words, $h_\gamma$ is a certain Hardy function, and thus there exists a function $f_\gamma:\mathbb{R}\to \mathbb{R}$ in $L^2$ which is vanishing on $(-\infty,0)$ and has $\mathcal{L}[f_\gamma](z) = h_\gamma(z)$ when $\text{Re}(z)< 0$, see \cite{contARMAframework,Doetsch,Dym:gaussian}. Note that $f_\gamma$ is indeed real-valued, since $\overline{h_\gamma (x-iy)} = h_\gamma(x+iy)$ for $y\in \mathbb{R}$ and a fixed $x<0$. We can apply \cite[Proposition~2.3]{TaqquFractional} to deduce that there exists a function $g\in L^2$ satisfying (\ref{kernelChar}) and that it can be represented as the (left-sided) Riemann-Liouville fractional integral of $f_{0}$, that is,
\begin{align*}
g(t) = \frac{1}{\Gamma (\beta )} \int_0^t f_{0}(u) (t-u)^{\beta -1}\, du
\end{align*}
for $t>0$. Conversely, \cite[Theorem~2.1]{TaqquFractional} ensures that $D^\beta g$ given by (\ref{dAlphaG}) is a well-defined limit and that $D^\beta g = f_0$. In particular, we have shown (\ref{riemannLiouvilleg}) and if we can argue that $f_{0}\in L^1$, we have shown (\ref{dAlphaLimit}) as well. This follows from the assumption in (\ref{finiteEta}), since then we have that $y \mapsto \mathcal{L}[f_0](x+iy)$ is differentiable for any $x\leq 0$ (except at $0$ when $x=0$) and
\begin{align}\label{diffLaplace}
\begin{aligned}
\mathcal{L}[u\mapsto uf_{0}(u)](x+iy)&=-i\frac{d}{dy} \mathcal{L}[f_0](x+iy) \\
&=\frac{\mathcal{L}[u \, \eta (du)](x+iy)-(1-\beta)(x+iy)^{-\beta}}{h(x+iy)^2}.
\end{aligned}
\end{align}
The function $\mathcal{L}[u\mapsto uf_{0}(u)]$ is analytic on $\{z \in \mathbb{C}\, :\, \text{Re}(z)<0\}$ and from the identity (\ref{diffLaplace}) it is not too difficult to see that it also satisfies the Hardy condition (\ref{HardyCond}). This means $u\mapsto uf_{0}(u)$ belongs to $L^2$, and hence we have that $f_{0}$ belongs to $L^1$. Since $g$ is the Riemann-Liouville integral of $f_0$ of order $\beta$ and $f_0 \in L^1 \cap L^2$, \cite[Proposition~4.3]{basse2018multivariate} implies that $g \in L^\gamma$ for $(1-\beta)^{-1}<\gamma \leq 2$.

It is straightforward to verify (\ref{functionalEquation}) and to obtain the identity
\begin{align*}
\int_s^t \big( D^\beta g\big)\ast \eta (u-\cdot)\, du = \int_\mathbb{R} \big(D^\beta_-\mathds{1}_{(s,t]} \big)(u)\, g \ast \eta (u- \cdot)\, du
\end{align*}
almost everywhere by comparing their Fourier transforms. This establishes the relation
\begin{align*}
g(t-v) - g(s-v) = \int_s^t \big(D^\beta g \big)\ast \eta (u-v)\, du + \mathds{1}_{(s,t]}(v)
\end{align*}

By letting $s \to - \infty$, and using that $D^\beta g$ and $g$ are both vanishing on $(-\infty,0)$, we deduce that
\begin{align*}
g(t) = \mathds{1}_{[0,\infty)}(t)\biggr( 1+ \int_0^t(D^\beta g)\ast \eta (u)\, du  \biggr),
\end{align*}
for almost all $t\in \mathbb{R}$ which shows (\ref{functionalRelation}) and, thus, finishes the proof.
\end{proof}

\begin{proof}[Proof of Theorem~\ref{existenceTheorem}]
Since $g\in L^2$, according to Proposition~\ref{kernels}, and $\mathbb{E}[L_1^2]<\infty$ and $\mathbb{E}[L_1] = 0$,
\begin{align*}
X_t = \int_{-\infty}^t g(t-u)\, dL_u,\quad t \in \mathbb{R},
\end{align*}
is a well-defined process (e.g., in the sense of \cite{Rosinski_spec}) which is stationary with mean zero and finite second moments. By integrating both sides of (\ref{functionalEquation}) with respect to $(L_t)_{t\in \mathbb{R}}$ we obtain
\begin{align*}
X_t -X_s = \int_\mathbb{R}\biggr(\int_\mathbb{R} \big(D_-^\beta \mathds{1}_{(s,t]} \big)(u)\, g\ast \eta (u-r)\, du \biggr)\, dL_r + L_t -L_s.
\end{align*}
By a stochastic Fubini result (such as \cite[Theorem~3.1]{QOU}) we can change the order of integration (twice) and obtain
\begin{align*}
\int_\mathbb{R}\biggr(\int_\mathbb{R} \big(D_-^\beta \mathds{1}_{(s,t]} \big)(u)\, g\ast \eta (u-r)\, du \biggr)\, dL_r = \int_\mathbb{R}\big(D_-^\beta \mathds{1}_{(s,t]} \big)(u)\, X\ast \eta (u)\, du.
\end{align*}
This shows that $(X_t)_{t\in\mathbb{R}}$ is a solution to (\ref{fractionalSDDE}). To show uniqueness, note that the spectral process $\Lambda_X$ of any purely non-deterministic solution $(X_t)_{t\in \mathbb{R}}$ satisfies
\begin{align}\label{simpleRelation}
\int_\mathbb{R}\mathcal{F}[\mathds{1}_{(s,t]}](y)(iy)^\beta h(-iy)\, \Lambda_X (dy) = L_t - L_s
\end{align}
almost surely for any choice of $s<t$ by Theorem~\ref{spectralTheorem} and Proposition~\ref{FubiniRelation}. Using the fact that $(X_t)_{t\in \mathbb{R}}$ is purely non-deterministic, $F_X$ is absolutely continuous with respect to the Lebesgue measure, and hence we can extend (\ref{simpleRelation}) from $\mathds{1}_{(s,t]}$ to any function $f\in L^2$ using an approximation of $f$ with simple functions of the form $s = \sum_{j=1}^n \alpha_j \mathds{1}_{(t_{j-1},t_j]}$ for $\alpha_j \in \mathbb{C}$ and $t_0<t_1<\cdots <t_n$. Specifically, we establish that
\begin{align}\label{generalRelation}
\int_\mathbb{R}\mathcal{F}[f](y)(iy)^\beta h(-iy)\, \Lambda_X (dy) = \int_\mathbb{R} f(u)\, dL_u
\end{align}
almost surely for any $f \in L^2$. In particular we may take $f = g(t-\cdot)$, $g$ being the solution kernel characterized in (\ref{kernelChar}), so that $\mathcal{F}[g(t-\cdot)](y) = e^{ity}(iy)^{-\beta}/h(-iy)$ and (\ref{generalRelation}) thus implies that
\begin{align*}
X_t = \int_{-\infty}^t g(t-u)\, dL_u,
\end{align*}
which ends the proof.

\end{proof}

\begin{proof}[Proof of Proposition~\ref{fractionalX}]
We start by arguing that the limit in (\ref{dAlphaX}) exists and is equal to $\int_{-\infty}^t D^\beta g (t-u)\, dL_u$. For a given $\delta>0$ it follows by a stochastic Fubini result that
\begin{align}\label{FubiniExistence}
\frac{\beta}{\Gamma (1-\beta)}\int_\delta^\infty \frac{X_t-X_{t-u}}{u^{1+\beta}}\, du  = \int_\mathbb{R}D^\beta_\delta g(t-r)\, dL_r,
\end{align}
where
\begin{align*}
D^\beta_\delta g (t) = \frac{\beta}{\Gamma (1- \beta)} \int_\delta^\infty \frac{g(t) - g(t-u)}{u^{1+ \beta}}\, du
\end{align*}
for $t>0$ and $D^\beta_\delta g(t) = 0$ for $t\leq0$. Suppose for the moment that $(L_t)_{t\in \mathbb{R}}$ is a Brownian motion, so that $(X_t)_{t\in \mathbb{R}}$ is $\gamma$-H\"{o}lder continuous for all $\gamma \in (0,1/2)$ by (\ref{fractionalSDDE}). Then, almost surely, $u \mapsto (X_t-X_{t-u})/u^{1+\beta}$ is in $L^1$ and the relation (\ref{FubiniExistence}) thus shows that
\begin{align*}
\int_\mathbb{R}\big[ D_\delta^\beta g(t-r)-D_{\delta'}^\beta g (t-r) \big]\, dL_r \overset{\mathbb{P}}{\to} 0\quad \text{as}\quad \delta,\delta' \to 0,
\end{align*}
which in turn implies that $(D^\beta_\delta g)_{\delta >0}$ has a limit in $L^2$. We also know that this limit must be $D^\beta g$, since $D^\beta_\delta g \to D^\beta g$ pointwise as $\delta \downarrow 0$ by (\ref{dAlphaG}). Having established this convergence, which does not rely on $(L_t)_{t\in \mathbb{R}}$ being a Brownian motion, it follows immediately from (\ref{FubiniExistence}) and the isometry property of the integral map $\int_\mathbb{R} \cdot dL$ that the limit in (\ref{dAlphaX}) exists and that $D^\beta X_t = \int_{-\infty}^t D^\beta g (t-u)\, dL_u$. To show (\ref{fractionalChange}) we start by recalling the definition of $D^\beta_- \mathds{1}_{(s,t]}$ in (\ref{fracIndicator}) and that $\mathcal{F}[D^\beta_- \mathds{1}_{(s,t]}](y) = (iy)^\beta \mathcal{F}[\mathds{1}_{(s,t]}](y)$. This identity can be shown by using that the improper integral $\int_0^\infty e^{\pm iv} v^{\gamma -1}\, dv$ is equal to $\Gamma (\gamma) e^{\pm i\pi \gamma/2}$ for any $\gamma \in (0,1)$. Now observe that
\begin{align*}
\mathcal{F}\biggr[\int_\mathbb{R} \big(D^\beta_- \mathds{1}_{(s,t]}\big)(u)\, g\ast \eta (u-\cdot)\, du \biggr](y) &= (iy)^\beta \mathcal{F}[\mathds{1}_{(s,t]}](y) \mathcal{F}[g](-y) \mathcal{F}[\eta](-y)\\
&= \mathcal{F}[\mathds{1}_{(s,t]}](y) \mathcal{F}\big[\big(D^\beta g\big)\ast \eta \big] (-y)\\
&= \mathcal{F}\biggr[\int_s^t (D^\beta g\big)\ast \eta (u-\cdot) \, du \biggr](y),
\end{align*}
and hence $\int_\mathbb{R} \big(D^\beta_- \mathds{1}_{(s,t]}\big)(u)\,  g\ast \eta (u-\cdot)\, du = \int_s^t (D^\beta g\big)\ast \eta (u-\cdot) \, du$ almost everywhere. Consequently, using that $D^\beta X_t = \int_{-\infty}^t D^\beta g (t-u)\, dL_u$ and applying a stochastic Fubini result twice,
\begin{align*}
\int_s^t \big(D^\beta X \big)\ast \eta (u)\, du &= \int_\mathbb{R}\int_s^t \big(D^\beta g \big)\ast \eta (u-r)\, du \, dL_r\\
&= \int_\mathbb{R}\int_\mathbb{R} \big(D^\beta_- \mathds{1}_{(s,t]} \big)(u)\, g \ast \eta (u-r)\, du \, dL_r\\
&= \frac{1}{\Gamma (1-\beta)}\int_\mathbb{R} \big[(t-u)_+^{-\beta} - (s-u)_+^{-\beta} \big]\, X\ast \eta (u)\, du.
\end{align*}
The semimartingale property of $(X_t)_{t\in \mathbb{R}}$ is now an immediate consequence of (\ref{fractionalSDDE}).
\end{proof}

\begin{proof}[Proof of Proposition~\ref{memory}]
Using (\ref{spectralDensity}) and that $h(0)= - \eta ([0,\infty))$, it follows that $f_X(y) \sim \vert y \vert^{-2 \beta}/\eta ([0,\infty))^2$ as $y \to 0$. To show the asymptotic behavior of $\gamma_X$ at $\infty$ we start by recalling that, for $u,v \in \mathbb{R}$,
\begin{align*}
\int_{u \vee v}^\infty (s-u)^{\beta-1}(s-v)^{\beta-1}\, ds = \frac{\Gamma (\beta)\Gamma (1-2\beta)}{\Gamma (1-\beta)} \vert u- v\vert^{2\beta -1}
\end{align*}
by \cite[p.~404]{gripenberg1996prediction}. Having this relation in mind we use Proposition~\ref{kernels}(\ref{riemannLiouvilleg}) and (\ref{autocovariance}) to do the computations
\begin{align}
\gamma_X (t) =&\, \frac{1}{\Gamma (\beta)^2}\int_\mathbb{R}\int_\mathbb{R}\int_\mathbb{R} D^\beta g(u) D^\beta g(v)(s+t-u)_+^{\beta-1}(s-v)_+^{\beta -1}\, dv \, du\, ds \notag\\
=&\, \frac{1}{\Gamma (\beta)^2}\int_\mathbb{R}\int_\mathbb{R} D^\beta g(u) D^\beta g(v) \notag\\
&\cdot\, \int_{(u-t)\vee v}^\infty (s-(u-t))^{\beta-1}(s-v)^{\beta -1}\, ds\,  dv\, du \notag \\
=&\, \frac{\Gamma (1-2\beta)}{\Gamma (\beta)\Gamma(1-\beta)} \int_\mathbb{R}\int_\mathbb{R}D^\beta g (u)D^\beta g (v) \vert u-v-t \vert^{2\beta -1}\, dv \, du \notag \\
=&\, \frac{\Gamma (1-2\beta)}{\Gamma (\beta)\Gamma(1-\beta)} \int_\mathbb{R} \gamma (u)\vert u-t\vert^{2\beta -1}\, du, \label{autoCovProof}
\end{align}
where $\gamma (u)= \int_\mathbb{R}D^\beta g(u+v) D^\beta g (v)\, dv$. Note that $\gamma \in L^1$ since $D^\beta g \in L^1$ by Proposition~\ref{kernels} and, using Plancherel's theorem,
\begin{align*}
\gamma (u) = \int_\mathbb{R}e^{iuy} \big\vert \mathcal{F}\big[D^\beta g\big](y) \big\vert^2 \, dy =  \mathcal{F}\big[\vert h(i\cdot)\vert^{-2} \big] (u).
\end{align*}
In particular $\int_\mathbb{R} \gamma (u)\, du = \vert h(0)\vert^{-2} = \eta ([0,\infty))^{-2}$, and hence it follows from (\ref{autoCovProof}) that we have shown the result if we can argue that
\begin{align}\label{shortGammaConv}
\frac{\int_\mathbb{R} \gamma (u) \vert u - t \vert^{2\beta -1}\, du }{t^{2\beta - 1}} = \int_\mathbb{R}\frac{\gamma (u)}{\vert \tfrac{u}{t}-1 \vert^{1-2\beta}}\, du \to \int_\mathbb{R}\gamma (u)\, du,  \quad t \to \infty.
\end{align}
It is clear by Lebesgue's theorem on dominated convergence that 
\begin{align*}
\int_{-\infty}^0 \frac{\gamma (u)}{\vert\tfrac{u}{t}-1\vert^{1-2\beta}}\, du \to \int_{-\infty}^0 \gamma (u)\, du,\quad t \to \infty.
\end{align*}
Moreover, since $\vert h(i\cdot)\vert^{-2}$ is continuous at $0$ and differentiable on $(-\infty,0)$ and $(0,\infty)$ with integrable derivatives, it is absolutely continuous on $\mathbb{R}$ with a density $\phi$ in $L^1$. As a consequence, $\gamma (u) = \mathcal{F}[\phi](y)/(-iy)$ and, thus,
\begin{align}\label{intermed}
\int_{t/2}^\infty \frac{\gamma (u)}{\vert \tfrac{u}{t}-1\vert^{1-2\beta}}\, du = \int_{1/2}^\infty \frac{t\gamma (tu)}{\vert u-1\vert^{1-2\beta}}\, du = i\int_{1/2}^\infty \frac{\mathcal{F}[\phi](tu)}{u \vert u-1\vert^{1-2\beta}}\, du.
\end{align}
By the Riemann-Lebesgue lemma and Lebesgue's theorem on dominated convergence it follows that the right-hand side of expression in (\ref{intermed}) tends to zero as $t$ tends to infinity. Finally, integration by parts and the symmetry of $\gamma$ yields
\begin{align*}
\int_0^{t/2} \gamma (u) \biggr(1 - \frac{1}{\vert \tfrac{u}{t} - 1 \vert^{1- 2\beta}} \biggr)\, du =&\, \int_0^{1/2} t \gamma (tu)\biggr(1 - \frac{1}{(1-u)^{1-2\beta}} \biggr)\, du\\
=&\, \big(2^{1-2\beta} - 1\big)\int_{-\infty}^{-t/2}\gamma (u)\, du  \\
&- \int_0^{1/2} \frac{1- 2\beta}{(1-u)^{2-2\beta}} \int_{-\infty}^{-tu} \gamma (v)\, dv\, du,
\end{align*}
where both terms on the right-hand side converge to zero as $t$ tends to infinity. Thus, we have shown (\ref{shortGammaConv}), and this completes the proof.
\end{proof}

\begin{proof}[Proof of Proposition~\ref{roughness}] Observe that it is sufficient to argue $\mathbb{E}[(X_t-X_0)^2] \sim t$ as $t \downarrow 0$. By using the spectral representation $X_t = \int_\mathbb{R} e^{ity}\, \Lambda_X (dy)$ and the isometry property of the integral map $\int_\mathbb{R} \cdot d\Lambda_X :L^2(F_X) \to L^2(\mathbb{P})$, see \cite[p.~389]{grimmett2001probability}, we have that
\begin{align}\label{keyRelationRough}
\frac{\mathbb{E}\big[(X_t -X_0)^2 \big]}{t} &= t^{-2}\int_\mathbb{R}\big\vert 1- e^{iy}\big\vert^2 f_X (y/t)\, dy \notag \\
&= \int_\mathbb{R}\frac{\big\vert 1- e^{iy} \big\vert^2}{\vert y \vert^{2\beta}\big\vert (-iy)^{1-\beta} - t^{1-\beta} \mathcal{F}[\eta](y/t) \big\vert^2}\, dy.
\end{align}
Consider now a $y \in \mathbb{R}$ satisfying $\vert y \vert \geq C_1 t$ with $C_1:= (2 \vert \eta \vert ([0,\infty)))^{1/(1-\beta)}$. In this case $\vert y \vert^{1-\beta}/2 - \vert t^{1-\beta}\mathcal{F}[\eta](y/t)\vert\geq 0$, and we thus get by the reversed triangle inequality that
\begin{align*}
\frac{\big\vert 1- e^{iy}\big\vert^2}{\vert y \vert^{2\beta}\big\vert (-iy)^{1-\beta}-t^{1-\beta}\mathcal{F}[\eta](y/t)\big\vert^2} \leq  2\frac{\big\vert 1- e^{iy}\big\vert^2}{y^2}.
\end{align*}
If $\vert y\vert < C_1 t$, we note that the assumption on the function in (\ref{hFunction}) implies that 
\begin{align*}
C_2 := \inf_{\vert x \vert \leq C_1} \big\vert (-ix)^{1-\beta}- \mathcal{F}[\eta](x) \big\vert >0,
\end{align*}
which shows that
\begin{align*}
\big\vert (-iy)^{1-\beta} - t^{1-\beta} \mathcal{F}[\eta](y/t)\big\vert \geq t^{1-\beta} C_2 \geq \frac{C_2}{C_1^{1-\beta}} \vert y \vert^{1-\beta}.
\end{align*}
This establishes that
\begin{align*}
\frac{\big\vert 1-e^{iy} \big\vert^2}{\vert y \vert^{2\beta} \big\vert (-iy)^{1-\beta} - t^{1-\beta} \mathcal{F}[\eta](y/t) \big\vert^2} \leq \frac{C_1^{2(1-\beta)}}{C_2^2} \frac{\big\vert 1- e^{iy} \big\vert^2}{y^2}.
\end{align*}
Consequently, it follows from (\ref{keyRelationRough}) and Lebesgue's theorem on dominated convergence that
\begin{align*}
\frac{\mathbb{E}\big[(X_t - X_0)^2 \big]}{t} \to \int_\mathbb{R} \frac{\big\vert 1 - e^{iy} \big\vert^2}{y^2}\, dy = \int_\mathbb{R} \vert \mathcal{F}[\mathds{1}_{(0,1]}](y)\vert^2\, dy = 1
\end{align*}
as $h \downarrow 0$, which was to be shown.
\end{proof}

\begin{proof}[Proof of Proposition~\ref{prediction}]
We start by arguing that the first term on the right-hand side of the formula is well-defined. In order to do so it suffices to argue that
\begin{align}\label{EstimateFinite}
\begin{aligned}
\MoveEqLeft\EE  \biggr[\int_{0}^{t-s} \int_{-\infty}^s \vert X_w \vert    \int_{[0,\infty)} \big\vert \big(D_-^\beta \mathds{1}_{(s,t-u]}\big)(v+w)\big\vert  \, \vert \eta \vert (dv) \, dw \, \vert g \vert (du)\biggr] \\
& \leq \mathbb{E}[\vert X_0\vert] \int_{0}^{t-s}\int_{[0,\infty)}  \int_{-\infty}^s  \big\vert \big(D_-^\beta \mathds{1}_{(s,t-u]}\big)(v+w)\big\vert  \, dw  \, \vert \eta \vert (dv) \, \vert g \vert (du)
\end{aligned}
\end{align}
is finite. This is implied by the facts that
\begin{align*}
 \MoveEqLeft\Gamma (1-\beta)\int_{-\infty}^s  \big\vert \big(D_-^\beta\mathds{1}_{(s,t-u]}\big)(v+w)\vert  \, dw \\
  \leq & \int_{u+s-t}^0  (t-s-u+w)^{-\beta} \, dw + \int_0^1  w^{-\beta} - (t-s-u+w)^{-\beta}  \, dw\\
  &+(1+\beta) \int_1^\infty  w^{-1-\beta} (t-s-u)  \, dw\\
 =& \frac{1}{1-\beta} \big(2(t-s-u)^{1-\beta} + 1 - (t-s-u+1)^{1-\beta}\big) + \frac{(1+\beta)}{\beta} (t-s-u) \\
 \leq & \frac{2}{1-\beta} (t-s)^{1-\beta}  + \frac{(1+\beta)}{\beta} (t-s)
\end{align*}
for $u \in [0,t-s]$ and $g(du)$ is a finite measure (since $D^\beta g \in L^1$ by Proposition~\ref{kernels}). Now fix an arbitrary $z \in \mathbb{C}$ with $\text{Re}(z)<0$. It follows from (\ref{fractionalSDDE}) that 
\begin{align}\label{LaplaceOfEq}
\begin{aligned}
\mathcal{L}[X\mathds{1}_{(s,\infty)}](z) =& X_s\mathcal{L}[\mathds{1}_{(s,\infty}](z)  + \mathcal{L}[\mathds{1}_{(s,\infty)} (L_\cdot - L_s)](z)\\
&+ \mathcal{L}\biggr[\mathds{1}_{(s,\infty)}\int_\mathbb{R} X_u \int_{[0,\infty)}\big(D^\beta_- \mathds{1}_{(s,\cdot]} \big)(u+v)\,  \eta (dv)\, du \biggr] (z).
\end{aligned}
\end{align}
By noting that $(D^\beta_-\mathds{1}_{(s,t]}) (u) =0$ when $t\leq s <u$ we obtain
\begin{align*}
\MoveEqLeft\mathcal{L}\biggr[\mathds{1}_{(s,\infty)}\int_s^\infty X_u \int_{[0,\infty)}\big(D^\beta_- \mathds{1}_{(s,\cdot]} \big)(u+v)\,  \eta (dv)\, du \biggr] (z) \\
&=\frac{1}{\Gamma (1-\beta)}\LL\biggr[ \int_s^\infty X_u  \int_{[0,\infty)} (\cdot - u-v)^{-\beta}_+ \, \eta(dv) \, du\biggr](z) \\
 &= \LL[ \mathds{1}_{(s,\infty)}X](z) \LL[\eta](z) (-z)^{\beta-1}. 
\end{align*}
Combining this observation with \eqref{LaplaceOfEq} we get the relation
\begin{align*}
\MoveEqLeft \big(-z -   (-z)^{\beta}\LL[\eta](z) \big)\LL[\mathds{1}_{(s,\infty)}X](z) \\
=&X_s (-z)\LL[ \mathds{1}_{(s,\infty)}](z)+ (-z) \LL[\mathds{1}_{(s,\infty)}(L-L_s)](z)\\
&+ (-z) \LL\biggr[\mathds{1}_{(s,\infty)}  \int_{-\infty}^s X_u  \int_{[0,\infty)} \big(D^\beta_-\mathds{1}_{(s,\cdot]}\big)(u+v)\, \eta(dv) \, du\biggr](z), 
\end{align*}
which implies
\begin{align*}
\MoveEqLeft\LL[\mathds{1}_{(s,\infty)}X](z) \\
=& \LL[g](z) \LL[X_s \delta_0 (s-\cdot)](z) + \LL[g](z)(-z) \LL[\mathds{1}_{(s,\infty)}(L-L_s)](z)\\
&+ \LL[g](z) (-z)  \LL\biggr[\mathds{1}_{(s,\infty)} \int_{-\infty}^s X_u  \int_{[0,\infty)} \big(D^\beta_-\mathds{1}_{(s,\cdot]}\big)(u+v) \, \eta(dv) \, du\biggr](z) \\
=& \LL[g(\cdot - s)X_s](z) + \LL \biggr[ \int_s^\cdot g(\cdot-u)dL_u\biggr](z) \\
&+ \LL\biggr[\int_{0}^{\cdot-s} \int_{-\infty}^s X_w   \int_{[0,\infty)}\big(D^\beta_-\mathds{1}_{(s,\cdot-u]}\big)(v+w) \, \eta(dv) \, dw \, g(du)\biggr](z).
\end{align*}
This establishes the identity
\begin{align} \label{varOfConst}
\begin{aligned}
X_t =& g(t-s) X_s + \int_s^t g(t-u)\, dL_u\\
&+\int_0^{t-s} \int_{-\infty}^s X_w \int_{[0,\infty)} \big(D^\beta_-\mathds{1}_{(s,t-u]} \big) (v+w)\, \eta (dv)\, dw\, g(du)
\end{aligned}
\end{align}
almost surely for Lebesgue almost all $t>s$. Since both sides of (\ref{varOfConst}) are continuous in $L^1 (\mathbb{P})$, the identity holds for each fixed pair $s<t$ almost surely as well. By applying the conditional mean $\mathbb{E}[\cdot \mid X_u,\, u \leq s]$ on both sides of (\ref{varOfConst}) we obtain the result.
\end{proof}

\begin{proof}[Proof of Corollary~\ref{exponentialExistence}]
In this setup it follows that the function $h$ in (\ref{hFunction}) is given by
\begin{align*}
h(z) = (-z)^{1-\beta} +\kappa + \frac{R(-z)}{Q(-z)},
\end{align*}
where $Q(z) \neq 0$ whenever $\text{Re}(z) \geq 0$ by the assumption on $A$. This shows that $h$ is non-zero (on  $\{z\in \mathbb{C}\, :\, \text{Re}(z)\leq 0\}$) if and only if
\begin{align}\label{expCondition}
Q(z)\big[ z^{1-\beta} + \kappa \big]   + R(z) \neq 0\quad \text{for all } z \in \mathbb{C} \text{ with } \text{Re}(z) \geq 0.
\end{align}
Condition (\ref{expCondition}) may equivalently be formulated as $Q(z)[z + \kappa z^\beta] + R(z) z^\beta \neq 0$ for all $z\in \mathbb{C}\setminus \{0\}$ with $\text{Re}(z) \geq 0$ and $h(0) = \kappa + b^TA^{-1}e_1\neq 0$, which by Theorem~\ref{existenceTheorem} shows that a unique solution to (\ref{exponentialSDDE}) exists. It also provides the form of the solution, namely (\ref{solution}) with
\begin{align*}
\mathcal{F}[g](y) &= \frac{(-iy)^{-\beta}}{(-iy)^{1-\beta} +\kappa + \frac{R(-iy)}{Q(-iy)}} \\
&= \frac{Q(-iy)}{Q(-iy)\big[-iy + \kappa (-iy)^\beta \big] + R(-iy) (-iy)^\beta}
\end{align*}
for $y \in \mathbb{R}$. This finishes the proof.
\end{proof}

\begin{proof}[Proof of Proposition~\ref{fracDiffChange}]
We will first show that $D^\beta f \in L^1$. By using that $\int_0^\infty e^{Au}\, du = - A^{-1}$ we can rewrite $D^\beta f$ as 
\begin{align*}
D^\beta f (t) = \frac{1}{\Gamma (1-\beta)}b^T A\biggr(\int_0^t e^{Au}\big[(t-u)^{-\beta}-t^{-\beta} \big]\, du - \int_t^\infty e^{Au}t^{-\beta}\, du\biggr) e_1
\end{align*}
for $t>0$, from which we see that it suffices to argue that (each entry of)
\begin{align*}
t \mapsto \int_0^t e^{Au}\big[(t-u)^{-\beta}-t^{-\beta} \big]\, du
\end{align*}
belongs to $L^1$. Since $u \mapsto e^{Au}$ is continuous and with all entries decaying exponentially fast as $u \to \infty$, this follows from the fact that, for a given $\gamma >0$,
\begin{align*}
\MoveEqLeft\int_0^\infty \int_0^t e^{-\gamma u} \big\vert (t-u)^{-\beta}-t^{-\beta} \big\vert\, du\, dt\\
&\leq \int_0^\infty e^{-\gamma u}\biggr(\int_u^{u+1}\big((t-u)^{-\beta}+t^{-\beta} \big)\, dt + \beta u \int_1^\infty t^{-\beta -1}\, dt\biggr)\, du < \infty.
\end{align*}
Here we have used the mean value theorem to establish the inequality
\begin{align*}
\big\vert (t-u)^{-\beta}-t^{-\beta} \big\vert \leq \beta u( t-u)^{-\beta-1}
\end{align*}
for $0<u<t$. To show that $D^\beta f\in L^2$, note that it is the left-sided Riemann-Liouville fractional derivative of $f$, that is,
\begin{align*}
D^\beta f (t) &= \frac{1}{\Gamma (1-\beta)} \frac{d}{dt}\int_0^t f(t-u)u^{-\beta}\, du
\end{align*}
for $t>0$.
Consequently, it follows by \cite[Theorem~7.1]{samko1993fractional} that the Fourier transform $\mathcal{F}[D^\beta f]$ of $f$ is given by
\begin{align*}
\mathcal{F}\big[D^\beta f\big](y) = (-iy)^\beta \mathcal{F}[f](y)  = -(-iy)^\beta b^T(A+iy)^{-1}e_1,\quad y \in \mathbb{R},
\end{align*}
in particular it belongs to $L^2$ (e.g., by Cramer's rule), and thus $D^\beta f \in L^2$. By comparing Fourier transforms we establish that $(D^\beta g)\ast f = g \ast (D^\beta f)$, and hence it holds that
\begin{align*}
\int_0^\infty D^\beta X_{t-u} f(u)\, du = \int_\mathbb{R}\big(D^\beta g \big)\ast f (t-r) \, dL_r = \int_0^\infty X_{t-u}D^\beta f(u)\, du
\end{align*}
using Proposition~\ref{fractionalX} and a stochastic Fubini result. This finishes the proof.
\end{proof}

\begin{proof}[Proof of Proposition~\ref{stochFubini}]
First, note that (\ref{FubiniRelation}) is trivially true when $f$ is of the form
\begin{align}\label{simplef}
f(x,y) = \sum_{j=1}^n \alpha_j \mathds{1}_{A_j}(x) \mathds{1}_{B_j}(y)
\end{align}
for $\alpha_1,\dots, \alpha_n \in \mathbb{C}$ and Borel sets $A_1,B_1,\dots, A_n,B_n\subseteq \mathbb{R}$. Now consider a general $f\in L^2(\mu \times G)$ and choose a sequence of functions $(f_n)_{n\in \mathbb{N}}$ of the form (\ref{simplef}) such that $f_n \to f$ in $L^2(\mu \times G)$ as $n\to \infty$. Set
\begin{align*}
&X_n = \int_\mathbb{R}\biggr(\int_\mathbb{R}f_n(x,y)\, \mu (dx) \biggr)\, S(dy),\ X = \int_\mathbb{R} \biggr(\int_\mathbb{R}f(x,y)\, \mu (dx) \biggr)\, S(dy) \\
\text{and}\quad & Y= \int_\mathbb{R}\biggr(\int_\mathbb{R}f(x,y)\, S(dy) \biggr)\, \mu (dx).
\end{align*} 
Observe that $X$ and $Y$ are indeed well-defined, since $x\mapsto f(x,y)$ is in $L^1(\mu)$ for $G$-almost all $y$, $y\mapsto f(x,y)$ is in $L^2(G)$ for $\mu$-almost all $x$,
\begin{align*}
&\int_\mathbb{R}\biggr\vert \int_\mathbb{R}f(x,y)\, \mu (dx) \biggr\vert^2\, G(dy) \leq \mu (\mathbb{R})\int_{\mathbb{R}^2} \vert f(x,y)\vert^2\, (\mu \times G)(dx,dy)< \infty\\
\text{and}\quad & \mathbb{E}\biggr[\int_\mathbb{R}\biggr\vert \int_\mathbb{R} f(x,y)\, S(dy) \biggr\vert^2 \, \mu (dx)\biggr] = \int_{\mathbb{R}^2} \vert f(x,y)\vert^2\, (\mu \times G)(dx,dy) < \infty.
\end{align*}
Next, we find that
\begin{align*}
\mathbb{E}[\vert X-X_n \vert^2] &= \int_\mathbb{R}\biggr\vert \int_\mathbb{R} (f(x,y)-f_n(x,y) \, \mu (dx)\biggr\vert^2\, G(dy) \\
&\leq \mu (\mathbb{R}) \int_{\mathbb{R}^2} \vert f(x,y)-f_n(x,y)\vert^2\, (\mu \times G)(dx,dy)
\end{align*}
which tends to zero by the choice of $(f_n)_{n\in \mathbb{N}}$. Similarly, using that $X_n = \int_\mathbb{R}\big(\int_\mathbb{R}f_n(x,y)\, S(dy) \big)\, \mu (dx)$, one shows that $X_n \to Y$ in $L^2(\mathbb{P})$, and hence we conclude that $X=Y$ almost surely.
\end{proof}
\begin{proof}[Proof of Theorem~\ref{spectralTheorem}]
For any given $t\in \mathbb{R}$ set $f_t(y) = e^{ity}$, $y \in \mathbb{R}$, and let $H_F$ and $H_X$ be the set of all (complex) linear combinations of $\{f_t\, :\, t \in \mathbb{R}\}$ and $\{X_t\, :\, t \in \mathbb{R}\}$, respectively. By equipping $H_F$ and $H_X$ with the usual inner products on $L^2(F_X)$ and $L^2(\mathbb{P})$, their closures $\overline{H_F}$ and $\overline{H_X}$ are Hilbert spaces. Due to the fact that
\begin{align*}
\langle X_s,X_t \rangle_{L^2(\mathbb{P})}=\mathbb{E}[X_sX_t] = \int_\mathbb{R}e^{i(t-s)x}\, F_X(dy) = \langle f_s,f_t \rangle_{L^2(F_X)},\quad s,t \in \mathbb{R},
\end{align*}
we can define a linear isometric isomorphism $\mu:\overline{H_F} \to \overline{H_X}$ as the one satisfying
\begin{align*}
\mu \biggr( \sum_{j=1}^n\alpha_j f_{t_j} \biggr) = \sum_{j=1}^n \alpha_j X_{t_j}
\end{align*}
for any given $n\in \mathbb{N}$, $\alpha_1,\dots, \alpha_n \in \mathbb{C}$ and $t_1<\cdots < t_n$. Since $\mathds{1}_{(-\infty,y]} \in \overline{H_F}$ for each $y\in \mathbb{R}$, cf. \cite[p. 150]{Yaglom:cor}, we can associate a (complex-valued) process $\Lambda_X = \{\Lambda_X(y)\, :\, y\in \mathbb{R}\}$ to $(X_t)_{t\in \mathbb{R}}$ through the relation
\begin{align*}
\Lambda_X (y) = \mu (\mathds{1}_{(-\infty,y]}).
\end{align*}
It is straight-forward to check from the isometry property that $\Lambda_X$ is right-continuous in $L^2(\mathbb{P})$, has orthogonal increments and satisfies
\begin{align*}
\mathbb{E}\big[\vert \Lambda_X(y_2) - \Lambda_X(y_1)\vert^2\big] = F_X((y_1,y_2])
\end{align*}
for $y_1<y_2$. Consequently, integration with respect to $\Lambda_X$ of any function $f \in L^2(F_X)$ can be defined in the sense of \cite[pp 388-390]{grimmett2001probability}. For any $n\in \mathbb{N}$, $\alpha_1,\dots, \alpha_n \in \mathbb{C}$ and $t_0<t_1<\cdots < t_n$, we have 
\begin{align*}
\int_\mathbb{R}\biggr(\sum_{j=1}^n\alpha_j\mathds{1}_{(t_{j-1},t_j]}(y) \biggr)\, \Lambda_X(dy) = \sum_{j=1}^n \alpha_j \mu (\mathds{1}_{(t_{j-1},t_j]}) = \mu \biggr(\sum_{j=1}^n \alpha_j \mathds{1}_{(t_{j-1},t_j]} \biggr).
\end{align*}
Since $f \mapsto \int_\mathbb{R}f(y)\, \Lambda_X(dy)$ is a continuous map (from $L^2(F_X)$ into $L^2(\mathbb{P})$), it follows by approximation with simple functions and from the relation above that
\begin{align*}
\int_\mathbb{R} f(y)\, \Lambda_X (dy) = \mu (f)
\end{align*}
almost surely for any $f\in \overline{H_F}$. In particular, it shows that
\begin{align*}
X_t = \mu (f_t) = \int_\mathbb{R}e^{ity}\, \Lambda_X (dy),\quad t \in\mathbb{R},
\end{align*}
which is the spectral representation of $(X_t)_{t\in \mathbb{R}}$. 
\end{proof}

\subsection*{Acknowledgments}
The authors thank Andreas Basse-O'Connor and Jan Pedersen for helpful comments. The research of Richard Davis was supported in part by ARO MURI grant W911NF-12-1-0385. The research of Mikkel Slot Nielsen and Victor Rohde was supported by Danish Council for Independent Research grant DFF-4002-00003.

\bibliographystyle{imsart-nameyear}

\end{document}